\pdfoutput=1
\documentclass[onefignum,onetabnum]{siamart220329}

\usepackage{bm}
\usepackage{enumitem}
\usepackage{multirow}

\theoremstyle{plain}
\newtheorem{theorem}{Theorem}[section]
\newtheorem{lemma}[theorem]{Lemma}
\newtheorem{proposition}[theorem]{Proposition}
\newtheorem{corollary}[theorem]{Corollary}
\theoremstyle{definition}
\newtheorem{assumption}[theorem]{Assumption}
\newtheorem{definition}[theorem]{Definition}
\theoremstyle{remark}
\newtheorem{remark}[theorem]{Remark}

\newcommand{\diag}{\operatorname{diag}}
\newcommand{\argmin}{\operatorname*{arg\,min}}
\newcommand{\Pih}{\Pi_h}

\title{Hessian-Recovery-Based \(C^0\) Finite Element Methods\\ for Non-Divergence Form Elliptic Equations}
\author{Minqiang Xu\(^{1}\), Boying Wu\(^{2}\), and Lei Zhang\(^{1}\)\\
\small \(^{1}\)School of Mathematical Sciences, Zhejiang University of Technology, Hangzhou 310023, PR China\\
\small \(^{2}\)Department of Mathematics, Harbin Institute of Technology, Harbin 150001, PR China}
\date{}
\headers{Hessian-Recovery-Based \(C^0\) FEM}{M. Xu, B. Wu, and L. Zhang}

\begin{document}
\maketitle

\begin{abstract}
A Hessian-recovery-based \(C^0\) finite element framework is proposed for second-order elliptic equations in non-divergence form. The construction is based on a direct approximation of the strong non-divergence operator: the Hessian \(D^2u\) is replaced by a recovered Hessian \(H_hu_h\), so that \(A:D^2u\) is approximated by \(A:H_hu_h\). The resulting discretizations include a nodal formulation and a Galerkin-type formulation for general Lagrange finite element spaces, as well as a biorthogonal Petrov--Galerkin formulation for linear elements. The analysis focuses on the recovered nodal matrix and identifies two verifiable algebraic solvability mechanisms. The first is a globally monotone regime leading to a discrete maximum principle, and the second is a localized Schur-complement criterion for sign-violating rows. A uniform inverse bound and a condition-number estimate are derived in the globally monotone case. Residual consistency estimates are obtained from the Hessian recovery error. In the globally monotone regime, these estimates combine with the uniform inverse bound to give a nodal \(L^\infty\)-error estimate for the nodal formulation. Numerical experiments with nonsmooth and discontinuous coefficients support the predicted algebraic diagnostics and show the accuracy of the proposed recovered-residual discretizations. A Monge--Amp\`ere type test further illustrates the use of the recovered Hessian in a Newton iteration for a fully nonlinear problem.
\end{abstract}

\begin{keywords}
Non-divergence form elliptic equations; nonvariational finite element methods; Hessian recovery; Schur complement; biorthogonal basis; Monge--Amp\`ere equation.
\end{keywords}

\begin{MSCcodes}
65N30, 65N12, 65N15, 65N50.
\end{MSCcodes}

\section{Introduction}

A recurring difficulty in the numerical approximation of controlled diffusion and fully nonlinear problems is the treatment of linear elliptic operators in non-divergence form. Such operators appear, for example, in stochastic control and Hamilton--Jacobi--Bellman equations, as well as in Newton linearizations of fully nonlinear equations such as the Monge--Amp\`ere equation \cite{CaffarelliCabre1995,LakkisPryer2013,Oberman2008,FroeseOberman2011}. Their leading part is
\[
    A:D^2u=\sum_{\alpha,\beta=1}^2 a_{\alpha\beta}\partial_{\alpha\beta}u,
\]
rather than a divergence-form expression. As a result, the equation does not naturally fit into the standard \(H^1\)-variational framework. If the coefficient matrix \(A\) is nonsmooth or discontinuous, rewriting the operator in divergence form is generally unavailable or would introduce lower-order distributional terms. Hence the integration-by-parts arguments underlying classical finite element stability theory cannot be used directly. Numerical methods for such problems must therefore either approximate the strong operator itself or introduce alternative structures that play the role of a missing variational framework.

Existing methods realize these alternatives in several ways. One route relies on Cordes-type assumptions, under which Miranda--Talenti estimates recover a form of stability; this idea underlies a number of discontinuous Galerkin and \(C^0\) interior penalty methods, together with suitable stabilization \cite{SmearsSuli2013,FengHenningsNeilan2017,Kawecki2019}. A second route introduces a discrete substitute for the missing Hessian or enlarges the formulation, for instance through mixed finite element Hessians, least-squares formulations, weak Galerkin methods, or primal-dual weak Galerkin methods \cite{LakkisPryer2011,LakkisPryer2013,Gallistl2017,WangWang2018,LakkisMousavi2022}. Another line of work, especially for viscosity solutions and fully nonlinear equations, is based on monotone, stable, and consistent approximation principles and their finite-difference, wide-stencil, filtered, or discrete ABP-type realizations \cite{BarlesSouganidis1991,BonnansZidani2003,Oberman2008,FroeseOberman2011,NochettoZhang2018}. These approaches provide powerful and complementary tools, and they illustrate the variety of structures that can restore stability in the absence of a standard weak formulation.

A natural question is whether one can retain a standard \(C^0\) Lagrange finite element space while still approximating the strong non-divergence operator directly. The main obstacle is that a \(C^0\) finite element function does not provide a globally defined classical Hessian, so the strong operator cannot be evaluated in the usual pointwise sense. However, Lagrange finite element functions still retain local polynomial structure and nodal information. Hessian recovery uses this information to reconstruct second derivatives at the nodes and thereby provides a recovered Hessian for the discrete function. While recovery techniques are often used for derivative post-processing, superconvergence, and a posteriori error estimation, here the recovered derivatives are intended to enter the discrete operator itself. This makes it possible to build a direct approximation of the strong operator without using \(C^1\) elements or introducing auxiliary Hessian variables.

The interpretation of recovered derivatives as discrete differential operators has been explored in several \(C^0\) discretizations. For higher-order problems, including biharmonic equations, Cahn--Hilliard equations, and fourth-order regularizations of Monge--Amp\`ere equations, recovered Hessians provide the missing second-derivative information within \(C^0\) finite element spaces \cite{GuoZhangZou2018,XuGuoZou2019,ChenFengZhang2021,XuZhangWuLiu2025}. For non-divergence form equations, including Hamilton--Jacobi--Bellman problems, recovery ideas have also been combined with Cordes-type assumptions or discrete Miranda--Talenti estimates to obtain stable discretizations \cite{XuLinZou2023,ChuGuoZhang2025}. These developments establish recovered Hessians as viable substitutes for classical Hessians in \(C^0\)-based schemes. Existing recovery-based \(C^0\) discretizations, however, have been developed predominantly for linear or low-order elements.

Building on this viewpoint, the present work constructs a unified \(C^0\) nonvariational finite element framework around a recovered equation residual. Replacing \(D^2u\) by \(H_hu_h\) in \(A:D^2u=f\) gives the discrete residual \(A:H_hu_h-f\). The three schemes considered in this paper correspond to different ways of enforcing this residual. For general Lagrange finite element spaces, we formulate a nodal method and a Galerkin-type weak residual method. For linear elements, we further introduce a biorthogonal Petrov--Galerkin formulation that links the nodal residual with a variational testing procedure.

This recovered-residual framework shifts the main analytical question from variational stability to matrix-level solvability. The resulting matrices are neither generated by a coercive bilinear form nor automatically endowed with an \(M\)-matrix structure, so standard Galerkin stability arguments and classical monotone-matrix theory do not apply directly. We therefore analyze the recovered nodal matrix through its algebraic structure and formulate two verifiable solvability criteria. In the globally monotone regime, the constant-preserving property of the recovery operator yields a row-sum identity, which, together with the sign pattern of the sign-reversed matrix, leads to a discrete maximum-principle mechanism. When monotonicity is violated only locally, the sign-violating rows can be separated and the solvability test is reduced to a Schur-complement condition on the corresponding bad-row block. This gives a matrix-level criterion for checking solvability that is distinct from discrete Poincar\'e, Cordes, or Miranda--Talenti arguments.

The main contributions of this paper are summarized as follows.
\begin{enumerate}[leftmargin=2em]
\item We formulate a Hessian-recovery-based \(C^0\) nonvariational finite element framework for elliptic equations in non-divergence form. The framework includes a nodal recovered-residual method and a Galerkin-type recovered-residual method on general Lagrange finite element spaces, together with a biorthogonal Petrov--Galerkin realization of the nodal method for linear elements.
\item For the recovered nodal matrix, we develop two verifiable algebraic solvability criteria. The first applies in a globally monotone regime and yields a discrete maximum-principle mechanism; the second separates localized sign-violating rows and reduces the solvability test to a Schur-complement condition. These results also apply to the biorthogonal formulation through a positive diagonal row scaling. In the globally monotone case, we further derive a uniform inverse bound and an \(O(h^{-2})\) condition-number estimate.
\item We establish residual consistency estimates in terms of the Hessian recovery error. In the globally monotone regime, combining the nodal consistency estimate with the uniform inverse bound gives a nodal \(L^\infty\)-error estimate for the nodal formulation.
\item We conduct numerical experiments for nonsmooth coefficients, discontinuous coefficients, and a Monge--Amp\`ere type nonlinear problem. The experiments examine solution accuracy, condition-number behavior, and algebraic diagnostics, and they illustrate both the strengths and the different algebraic structures of the three formulations.
\end{enumerate}

Before turning to the detailed formulations and analysis, we specify the scope of the results proved below. The algebraic solvability theory is developed for the nodal recovered operator. Since the biorthogonal Petrov--Galerkin formulation differs from the nodal formulation only by a positive diagonal row scaling, the same nonsingularity results apply to that formulation as well. The Galerkin-type formulation, by contrast, provides a natural finite element realization of the same recovered residual, particularly for higher-order Lagrange spaces, but its mass-type averaging changes the nodal sign structure on which the maximum-principle and Schur-complement arguments rely. Therefore, the present analysis should not be interpreted as a stability theory for the Galerkin-type matrix. Rather, Scheme~2 is included as a computationally natural recovered-residual formulation, and its algebraic behavior is examined separately in the numerical experiments. Consistency estimates for the recovered residual are then established in the natural forms associated with the three formulations.

The rest of the paper is organized as follows. Section~\ref{sec:prelim} introduces the finite element spaces and the Hessian recovery operator. Section~\ref{sec:schemes} presents the recovered-residual formulations. Section~\ref{sec:solvability} develops the algebraic solvability criteria for the recovered nodal matrix. Section~\ref{sec:consistency} establishes residual consistency estimates and a nodal error consequence in the globally monotone regime. Section~\ref{sec:numerics} reports the numerical experiments, including nonsmooth and discontinuous coefficient tests and a Monge--Amp\`ere type nonlinear example. Section~\ref{sec:conclusion} concludes the paper.

\section{Preliminaries and the Hessian Recovery Operator}\label{sec:prelim}

\subsection{Model problem and finite element setting}

Let $\Omega\subset\mathbb R^2$ be a bounded polygonal domain. We consider the non-divergence form elliptic problem
\begin{equation}\label{eq:model}
    A:D^2u=f\quad\text{in }\Omega,
    \qquad
    u=g\quad\text{on }\partial\Omega,
\end{equation}
where $A=(a_{\alpha\beta})_{\alpha,\beta=1}^2$ is symmetric and uniformly positive definite. More precisely, there exist constants $0<\lambda_0\le \Lambda_0$ such that
\begin{equation}\label{eq:ellipticity}
    \lambda_0 |\xi|^2\le \xi^T A(x)\xi\le \Lambda_0 |\xi|^2,
    \qquad \forall \xi\in\mathbb R^2,
    \quad \hbox{a.e. }x\in\Omega.
\end{equation}
Since the nodal and biorthogonal formulations introduced below use nodal values of $A$ and $f$, we assume that these values are well defined, or that prescribed nodal approximations are used.

Let $\mathcal T_h$ be a shape-regular triangulation of $\Omega$ with mesh size $h$. Let
\[
    \mathcal N_h=\{z_i\}_{i=1}^N
\]
be the set of Lagrange nodes, and denote by $\mathcal N_h^I$ and $\mathcal N_h^B$ the sets of interior and boundary nodes, respectively. We define
\[
    I=\{i:z_i\in\mathcal N_h^I\},
    \qquad
    B=\{i:z_i\in\mathcal N_h^B\}.
\]
For an integer $k\ge1$, let
\begin{equation}\label{eq:Vh}
    V_h^k=
    \{v_h\in C^0(\overline\Omega): v_h|_T\in \mathbb P_k(T),\ \forall T\in\mathcal T_h\}.
\end{equation}
The homogeneous Dirichlet subspace is
\[
    V_{h,0}^k=
    \{v_h\in V_h^k: v_h(z_i)=0,\ z_i\in\mathcal N_h^B\}.
\]
For the nonhomogeneous boundary condition, we set
\[
    V_{h,g}^k=
    \{v_h\in V_h^k: v_h(z_i)=g(z_i),\ z_i\in\mathcal N_h^B\}.
\]
Let $\{\phi_i\}_{i=1}^N$ be the standard nodal basis of $V_h^k$.

\subsection{Polynomial-preserving Hessian recovery}

Standard $C^0$ Lagrange finite element functions do not have globally defined classical Hessians, because their gradients may be discontinuous across element interfaces. To approximate the non-divergence operator $A:D^2u$ within this finite element space, we employ a polynomial-preserving recovery (PPR) Hessian operator
\[
    H_h:V_h^k\to (V_h^k)^{2\times2}.
\]
The recovered strong operator used in the schemes below is $A:H_hu_h$. Although recovery operators are often used only as post-processing tools, the recovered Hessian here enters the discrete differential operator itself. We now define the PPR Hessian recovery operator and record the reproduction properties needed below.

For each node $z\in\mathcal N_h$, let $\omega_z$ be a local element patch around $z$, and choose a sampling set
\[
    \mathcal S_z\subset \mathcal N_h\cap\omega_z .
\]
Given $v_h\in V_h^k$, the PPR Hessian at $z$ is obtained by fitting a polynomial of degree $k+1$ to the nodal values of $v_h$ on $\mathcal S_z$. More precisely, let $p_z[v_h]\in\mathbb P_{k+1}(\omega_z)$ be the least-squares polynomial defined by
\begin{equation}\label{eq:ppr_ls}
    p_z[v_h]
    =
    \argmin_{p\in\mathbb P_{k+1}(\omega_z)}
    \sum_{\tilde z\in\mathcal S_z}
    |p(\tilde z)-v_h(\tilde z)|^2.
\end{equation}
The recovered Hessian is evaluated by differentiating the fitted polynomial at the node,
\begin{equation}\label{eq:ppr_hessian_node}
    (H_hv_h)(z)=D^2p_z[v_h](z),
\end{equation}
or, componentwise,
\begin{equation}\label{eq:ppr_component_node}
    H_h^{\alpha\beta}v_h(z)
    =
    \partial_{\alpha\beta}p_z[v_h](z),
    \qquad \alpha,\beta=1,2.
\end{equation}
The nodal values are then extended by Lagrange interpolation:
\begin{equation}\label{eq:hessian_interpolation}
    H_h^{\alpha\beta}v_h
    =
    \sum_{z_i\in\mathcal N_h}
    H_h^{\alpha\beta}v_h(z_i)\phi_i,
    \qquad \alpha,\beta=1,2.
\end{equation}
Thus $H_hv_h$ is a matrix-valued finite element function with entries in $V_h^k$.

The least-squares problem above is well posed only for sufficiently rich sampling sets. We impose this requirement through the following patch unisolvence condition.

\begin{assumption}[Patch unisolvence]\label{ass:ppr}
For every node $z\in\mathcal N_h$, the sampling set $\mathcal S_z$ is chosen so that the least-squares problem defining $p_z[v_h]$ has a unique solution in $\mathbb P_{k+1}(\omega_z)$.
\end{assumption}

Such sampling sets can be obtained by enlarging the local patch when necessary; see \cite{ZhangNaga2005,NagaZhang2005,GuoZhangZhao2017}. Under Assumption~\ref{ass:ppr}, the recovery operator has the following reproduction properties.

\begin{lemma}[Local polynomial reproduction]\label{lem:ppr_basic}
Under Assumption~\ref{ass:ppr}, the recovery operator $H_h$ is linear. Moreover, for each $z\in\mathcal N_h$, if $q\in\mathbb P_{k+1}(\omega_z)$ and $v_h\in V_h^k$ satisfies
\[
    v_h(\tilde z)=q(\tilde z),
    \qquad \forall \tilde z\in\mathcal S_z,
\]
then
\[
    H_hv_h(z)=D^2q(z).
\]
In particular, for any polynomial $q\in\mathbb P_{k+1}(\Omega)$,
\[
    H_h\Pih q(z)=D^2q(z),
    \qquad z\in\mathcal N_h.
\]
Consequently,
\begin{equation}\label{eq:constants_zero}
    H_hc=0,
    \qquad \forall c\in\mathbb R.
\end{equation}
\end{lemma}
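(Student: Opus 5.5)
The proof has three parts: linearity from the normal equations, reproduction from uniqueness of the least-squares minimizer, and then the two special cases by specialization.

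\textbf{Linearity.} Fix a node $z$ and a basis $\{q_m\}_{m=1}^{M}$ of $\mathbb P_{k+1}(\omega_z)$. Form the collocation matrix $V_z=(q_m(\tilde z))_{\tilde z\in\mathcal S_z,\,m}$ and write the fitted polynomial as $p=\sum_m c_m q_m$. The functional in \eqref{eq:ppr_ls} is $|V_zc-\mathbf v|^2$, where $\mathbf v$ is the vector of sampled values $v_h(\tilde z)$. Assumption~\ref{ass:ppr} gives a unique minimizer. Uniqueness forces $V_z$ to have trivial kernel: if $V_z d=0$ with $d\neq 0$, then $c+d$ would be a second minimizer. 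Hence $V_z^TV_z$ is invertible and
\[
c=(V_z^TV_z)^{-1}V_z^T\mathbf v .
\]
The map $v_h\mapsto\mathbf v$ is linear, so $v_h\mapsto p_z[v_h]$ is linear. Applying $D^2(\cdot)(z)$ preserves linearity, which gives linearity of the nodal values \eqref{eq:ppr_hessian_node}. Lagrange interpolation \eqref{eq:hessian_interpolation} is linear in those nodal values, so $H_h$ is linear.

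\textbf{Reproduction.} Suppose $q\in\mathbb P_{k+1}(\omega_z)$ and $v_h(\tilde z)=q(\tilde z)$ for all $\tilde z\in\mathcal S_z$. Then $q$ is admissible in \eqref{eq:ppr_ls} and attains the value $0$, which is the minimum possible since the functional is nonnegative. By the uniqueness in Assumption~\ref{ass:ppr}, $p_z[v_h]=q$ on $\omega_z$. Therefore $H_hv_h(z)=D^2q(z)$.

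\textbf{Global polynomials and constants.} For $q\in\mathbb P_{k+1}(\Omega)$, the interpolant $\Pih q$ coincides with $q$ at all Lagrange nodes, and in particular on $\mathcal S_z\subset\mathcal N_h$. The restriction $q|_{\omega_z}$ lies in $\mathbb P_{k+1}(\omega_z)$, so the previous step gives $H_h\Pih q(z)=D^2q(z)$.

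For a constant $c$, note that $c\in V_h^k$ and $\Pih c=c$. So every nodal value of $H_hc$ equals $D^2c=0$. The interpolation formula \eqref{eq:hessian_interpolation} then yields $H_hc\equiv0$, which is \eqref{eq:constants_zero}.

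\textbf{Main obstacle.} The only delicate point is interpreting Assumption~\ref{ass:ppr}. Uniqueness of the minimizer must be used both to get invertibility of the normal equations (for linearity) and to identify the minimizer with $q$ (for reproduction). Both uses follow from the single observation that uniqueness forces $\ker V_z=\{0\}$.
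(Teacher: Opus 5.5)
Your proof is correct and follows essentially the same route as the paper. You obtain linearity from the uniquely solvable least-squares fit (made explicit through the normal equations and $\ker V_z=\{0\}$), reproduction from uniqueness of the zero-residual minimizer, and the global-polynomial and constant cases by specialization, with your remark that the vanishing nodal values of $H_hc$ give $H_hc\equiv 0$ via \eqref{eq:hessian_interpolation} spelling out a step the paper leaves implicit.
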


\begin{proof}
For a fixed patch, the least-squares minimizer depends linearly on the sampled values of $v_h$, since the design matrix is fixed and the minimizer is unique by Assumption~\ref{ass:ppr}. Applying $D^2$ to the fitted polynomial gives the linearity of $H_h$.

If $v_h$ agrees with $q\in\mathbb P_{k+1}(\omega_z)$ on $\mathcal S_z$, then $p=q$ gives zero least-squares residual. By uniqueness of the minimizer, $p_z[v_h]=q$. Hence
\[
    H_hv_h(z)=D^2p_z[v_h](z)=D^2q(z).
\]
For a global polynomial $q\in\mathbb P_{k+1}(\Omega)$, the interpolant $\Pih q$ agrees with $q$ at all Lagrange nodes and hence on each sampling set $\mathcal S_z$. The reproduction identity therefore gives $H_h\Pih q(z)=D^2q(z)$. Taking $q$ to be constant yields $H_hc=0$.
\end{proof}

The identity $H_h1=0$, together with the partition of unity of the Lagrange basis, is the key ingredient in the row-sum identity for the recovered nodal matrices derived in Section~4.

\subsection{Recovery estimates}

The following estimate is the recovery result used in the subsequent consistency analysis. It is stated in a local form because the nodal formulation requires pointwise control at the Lagrange nodes, while the weak residual formulations use the corresponding $L^2$ bound.

\begin{lemma}[Local recovery estimates]\label{lem:hess_recovery_estimate}
Let the mesh family be shape-regular and quasi-uniform. Assume that the scaled local least-squares recovery functionals are uniformly stable in the sense that, for each node $z_i$,
\begin{equation}\label{eq:scaled_patch_stability}
    \left|H_h^{\alpha\beta}v_h(z_i)\right|
    \le
    Ch^{-2}
    \max_{\tilde z\in\mathcal S_{z_i}}
    |v_h(\tilde z)|,
    \qquad \alpha,\beta=1,2.
\end{equation}
Then, for $u\in W^{k+2,\infty}(\omega_{z_i})$,
\begin{equation}\label{eq:nodal_hessian_recovery}
    \left|
    D^2u(z_i)-H_h\Pih u(z_i)
    \right|
    \le
    Ch^k |u|_{W^{k+2,\infty}(\omega_{z_i})}.
\end{equation}
Consequently, if $u\in W^{k+2,\infty}(\Omega)$, then
\begin{equation}\label{eq:hess_recovery_estimate}
    \left\|D^2u-H_h\Pih u\right\|_{0,\Omega}
    \le
    Ch^k\|u\|_{W^{k+2,\infty}(\Omega)}.
\end{equation}
Here $\Pih u=I_h^ku$, and the constant $C$ is independent of $h$ and of the node $z_i$.
\end{lemma}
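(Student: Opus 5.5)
The nodal estimate \eqref{eq:nodal_hessian_recovery} follows from a Bramble--Hilbert argument that combines the local polynomial reproduction of Lemma~\ref{lem:ppr_basic} with the scaled stability \eqref{eq:scaled_patch_stability}. Fix a node $z_i$ and let $q\in\mathbb P_{k+1}(\omega_{z_i})$ be the Taylor polynomial of degree $k+1$ of $u$ at $z_i$. One could instead use an averaged Taylor polynomial over $\omega_{z_i}$, but the pointwise Taylor polynomial suffices for $u\in W^{k+2,\infty}$. Since $q$ matches $u$ to order $k+1$ at $z_i$, we have $D^2q(z_i)=D^2u(z_i)$. Let $w_h\in V_h^k$ be any finite element function with $w_h(\tilde z)=q(\tilde z)$ for $\tilde z\in\mathcal S_{z_i}$; for instance, take $w_h=\Pih q$ on the patch. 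Lemma~\ref{lem:ppr_basic} then gives $H_hw_h(z_i)=D^2q(z_i)=D^2u(z_i)$.

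By linearity of $H_h$ and the fact that the value $H_hv_h(z_i)$ depends only on the data $v_h|_{\mathcal S_{z_i}}$, we can write
\[
D^2u(z_i)-H_h\Pih u(z_i)=H_h(w_h-\Pih u)(z_i).
\]
The function $w_h-\Pih u$ takes the values $q(\tilde z)-u(\tilde z)$ on $\mathcal S_{z_i}$. Applying \eqref{eq:scaled_patch_stability} yields
\[
|D^2u(z_i)-H_h\Pih u(z_i)|\le Ch^{-2}\max_{\tilde z\in\mathcal S_{z_i}}|u(\tilde z)-q(\tilde z)|.
\]
The Taylor remainder estimate gives $|u(\tilde z)-q(\tilde z)|\le C|\tilde z-z_i|^{k+2}|u|_{W^{k+2,\infty}(\omega_{z_i})}$. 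We then need $|\tilde z-z_i|\le Ch$ for $\tilde z\in\mathcal S_{z_i}$, which holds because $\mathcal S_{z_i}\subset\omega_{z_i}$ and the patch consists of a uniformly bounded number of layers of elements under shape regularity and quasi-uniformity. Combining these bounds gives $Ch^{-2}h^{k+2}=Ch^k$.

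The main obstacle is justifying the Taylor expansion when the patch $\omega_{z_i}$ is not convex, as happens at boundary nodes or reentrant corners. There the segment from $z_i$ to $\tilde z$ may leave $\omega_{z_i}$. I would first try replacing the pointwise Taylor polynomial by the averaged Taylor polynomial over a ball, using the Bramble--Hilbert lemma on the (star-shaped with respect to a ball, uniformly in $h$) patch. This gives $\|u-q\|_{L^\infty(\omega_{z_i})}\le Ch^{k+2}|u|_{W^{k+2,\infty}(\omega_{z_i})}$. It also gives $|D^2(u-q)(z_i)|\le Ch^k|u|_{W^{k+2,\infty}}$ through a scaled estimate, so that an extra term of the same order appears and is harmless.

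For the global estimate \eqref{eq:hess_recovery_estimate}, write $e=D^2u-H_h\Pih u$ and split it as
\[
e=(D^2u-I_h^kD^2u)+I_h^k\bigl(D^2u-H_h\Pih u\bigr),
\]
which is valid because $H_h\Pih u$ is the Lagrange interpolant of its nodal values by \eqref{eq:hessian_interpolation}. The first term is bounded by $Ch^k\|u\|_{W^{k+2,\infty}}$ using the standard interpolation estimate, since $D^2u\in W^{k,\infty}$. The second term is a finite element function whose nodal values are bounded by $Ch^k|u|_{W^{k+2,\infty}}$ by the nodal estimate. Stability of the Lagrange basis then bounds its $L^\infty$ norm by $Ch^k\|u\|_{W^{k+2,\infty}}$. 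Multiplying by $|\Omega|^{1/2}$ gives the $L^2$ bound.
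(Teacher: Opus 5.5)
Your proposal is correct and follows the paper's proof essentially step for step. Both arguments take the degree-$(k+1)$ Taylor polynomial $q$ at $z_i$, use polynomial reproduction and linearity to write $D^2u(z_i)-H_h\Pi_h u(z_i)=H_h(w_h-\Pi_h u)(z_i)$, and apply the scaled stability bound to the $O(h^{k+2})$ Taylor remainder; for the $L^2$ bound both split through $I_h(D^2u)$, using Lagrange-basis stability together with the standard interpolation estimate. Your discussion of non-convex patches and the averaged-Taylor fallback is a refinement the paper does not address. It only matters for enlarged patches, since for a single-layer vertex patch the segment $[z_i,\tilde z]$ always stays in $\omega_{z_i}$.
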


\begin{proof}
Let $q_i\in\mathbb P_{k+1}(\omega_{z_i})$ be the Taylor polynomial of $u$ at $z_i$ of degree $k+1$. By Lemma~\ref{lem:ppr_basic}, any finite element function whose nodal values agree with $q_i$ on $\mathcal S_{z_i}$ has recovered Hessian $D^2q_i(z_i)=D^2u(z_i)$ at $z_i$. Hence, by the linearity of the local recovery map at $z_i$,
\[
    H_h\Pih u(z_i)-D^2u(z_i)
    =
    H_h(\Pih u-v_i)(z_i),
\]
where $v_i\in V_h^k$ is any finite element function satisfying $v_i(\tilde z)=q_i(\tilde z)$ for all $\tilde z\in\mathcal S_{z_i}$. Since $\Pih u(\tilde z)=u(\tilde z)$ at every Lagrange node, Taylor's theorem gives
\[
    |\Pih u(\tilde z)-v_i(\tilde z)|
    =|u(\tilde z)-q_i(\tilde z)|
    \le
    Ch^{k+2}|u|_{W^{k+2,\infty}(\omega_{z_i})},
    \qquad \tilde z\in\mathcal S_{z_i}.
\]
The assumed scaled stability of the local recovery functional therefore implies
\[
\begin{aligned}
    \left|H_h(\Pih u-v_i)(z_i)\right|
    &\le
    Ch^{-2}
    \max_{\tilde z\in\mathcal S_{z_i}}
    |\Pih u(\tilde z)-v_i(\tilde z)|  \\
    &\le
    Ch^k |u|_{W^{k+2,\infty}(\omega_{z_i})}.
\end{aligned}
\]

It remains to derive the $L^2$ estimate. Let $I_h(D^2u)$ denote the componentwise Lagrange interpolation of $D^2u$. Since
\[
    I_h(D^2u)-H_h\Pih u
\]
is a matrix-valued finite element function, the stability of the Lagrange basis on shape-regular quasi-uniform meshes yields
\[
    \left\|
    I_h(D^2u)-H_h\Pih u
    \right\|_{0,\Omega}
    \le
    C
    \max_{z_i\in\mathcal N_h}
    \left|
    D^2u(z_i)-H_h\Pih u(z_i)
    \right|.
\]
Applying the nodal estimate gives
\[
    \left\|
    I_h(D^2u)-H_h\Pih u
    \right\|_{0,\Omega}
    \le
    Ch^k\|u\|_{W^{k+2,\infty}(\Omega)}.
\]
On the other hand, the standard interpolation estimate gives
\[
    \left\|
    D^2u-I_h(D^2u)
    \right\|_{0,\Omega}
    \le
    Ch^k\|u\|_{W^{k+2,\infty}(\Omega)}.
\]
Combining the last two estimates, we obtain
\[
    \left\|D^2u-H_h\Pih u\right\|_{0,\Omega}
    \le
    Ch^k\|u\|_{W^{k+2,\infty}(\Omega)}.
\]
This completes the proof.
\end{proof}

\begin{remark}\label{rem:scaled_patch_stability}
The stability condition in Lemma~\ref{lem:hess_recovery_estimate} is the standard scaled-patch stability requirement for polynomial-preserving recovery. It is satisfied, for example, on shape-regular quasi-uniform mesh families with fixed local recovery patterns, provided the corresponding scaled least-squares matrices are uniformly well conditioned. Under this condition, the $O(h^k)$ estimate follows from polynomial reproduction and local patch stability, in the same spirit as the Hessian recovery analysis of Guo--Zhang--Zhao~\cite{GuoZhangZhao2017}.
\end{remark}

With the recovered-Hessian operator $H_h$ defined above, the non-divergence operator is discretized by
\[
    A:D^2u
    \quad\leadsto\quad
    A:H_hu_h.
\]
This leads to the three Hessian-recovery-based nonvariational schemes introduced in the next section.

\section{Hessian-Recovery-Based Nonvariational Schemes}\label{sec:schemes}

Given the recovered Hessian operator \(H_h\), we use the recovered equation residual
\[
    r_h(u_h):=A:H_hu_h-f
\]
as the organizing quantity for the discretizations. Two realizations are first formulated on general Lagrange finite element spaces: a nodal formulation, which enforces the residual at the interior nodes, and a Galerkin-type formulation, which tests the same residual against the standard finite element basis functions. In the linear-element setting, the nodal residual further admits a biorthogonal Petrov--Galerkin realization, linking pointwise enforcement with a variational testing procedure. All three formulations are therefore built from the same recovered Hessian construction, while their different enforcement mechanisms lead to different algebraic structures.

Throughout this section, if
\[
    u_h=\sum_{j=1}^N U_j\phi_j,
\]
then \(U=(U_1,\ldots,U_N)^T\) denotes the vector of nodal values, with interior and boundary parts \(U_I\) and \(U_B\).

\subsection{Scheme 1: Nodal formulation}

The first realization is the nodal formulation of the recovered residual. It is useful to separate two algebraic ingredients: the matrices representing the recovered Hessian itself, and the coefficient-weighted matrix representing the recovered non-divergence operator.

For \(\alpha,\beta=1,2\), define
\begin{equation}\label{eq:Rmatrix}
    R^{\alpha\beta}_{ij}
    =\bigl(H_h^{\alpha\beta}\phi_j\bigr)(z_i),
    \qquad i,j=1,\ldots,N .
\end{equation}
For a finite element function
\[
    u_h=\sum_{j=1}^N U_j\phi_j,
\]
the linearity of \(H_h\) gives
\begin{equation}\label{eq:R_action}
    \bigl(R^{\alpha\beta}U\bigr)_i
    =H_h^{\alpha\beta}u_h(z_i).
\end{equation}
Thus \(R^{\alpha\beta}\) maps nodal values to the nodal values of the recovered \((\alpha,\beta)\)-Hessian component.

The coefficient matrix \(A\) is then incorporated row by row. For \(A=(a_{\alpha\beta})_{\alpha,\beta=1}^2\), define
\begin{equation}\label{eq:KAdef}
    (K_A)_{ij}
    =\sum_{\alpha,\beta=1}^2
    a_{\alpha\beta}(z_i)R^{\alpha\beta}_{ij},
    \qquad i,j=1,\ldots,N .
\end{equation}
Equivalently,
\begin{equation}\label{eq:KAsum}
    K_A=\sum_{\alpha,\beta=1}^2 D_{\alpha\beta}R^{\alpha\beta},
    \qquad
    D_{\alpha\beta}=
    \diag\bigl(a_{\alpha\beta}(z_1),\ldots,a_{\alpha\beta}(z_N)\bigr).
\end{equation}
It follows that
\begin{equation}\label{eq:KU_residual}
    (K_AU)_i=A(z_i):H_hu_h(z_i),
\end{equation}
so \(K_A\) represents the nodal action of the recovered non-divergence operator.

\paragraph{Discrete formulation.}
Find \(u_h\in V_{h,g}^k\) such that
\begin{equation}\label{eq:scheme1}
    A(z_i):H_hu_h(z_i)=f(z_i),
    \qquad z_i\in\mathcal N_h^I .
\end{equation}
The equations are imposed at the interior nodes, while the discrete solution is sought in the conforming \(C^0\) Lagrange finite element space. Hence Scheme~1 is a nodal nonvariational formulation within the standard finite element trial space.

\paragraph{Matrix form.}
Let
\begin{equation}\label{eq:scheme1_K1_F1}
    K_{1,h}:=(K_A)_{II},
    \qquad
    (F_1)_i=f(z_i),\quad i\in I .
\end{equation}
Eliminating the prescribed boundary degrees of freedom gives
\begin{equation}\label{eq:scheme1_matrix}
    K_{1,h}U_I
    =F_{1,I}-(K_A)_{IB}U_B .
\end{equation}
Here \(U_I\) is the vector of interior unknowns, and \(U_B\) is fixed by the prescribed boundary data. The matrix \(K_{1,h}\) retains the nodal structure of the recovered operator \(A:H_h\) and is the matrix analyzed in the algebraic solvability results of Section~\ref{sec:solvability}.

\subsection{Scheme 2: Galerkin-type Hessian-recovery-based NFEM}

The second scheme is a Galerkin-type formulation of the recovered non-divergence residual. Rather than enforcing the residual at nodes, it tests $A:H_hu_h$ against standard finite element test functions.

\paragraph{Discrete formulation.}
Find $u_h\in V_{h,g}^k$ such that
\begin{equation}\label{eq:scheme2}
    (A:H_hu_h,v_h)=(f,v_h),
    \qquad \forall v_h\in V_{h,0}^k .
\end{equation}
This formulation is a natural finite element realization of the recovered residual. It keeps both the trial and test functions in standard \(C^0\) Lagrange finite element spaces and is particularly convenient for higher-order elements, for which the recovered Hessian construction and the residual testing can be carried out without introducing additional Hessian unknowns.

\paragraph{Matrix form.}
For $\alpha,\beta=1,2$, define the weighted mass matrices
\begin{equation}\label{eq:M_A}
    (M_A^{\alpha\beta})_{\ell i}
    =\int_\Omega a_{\alpha\beta}(x)\phi_i(x)\phi_\ell(x)\,dx .
\end{equation}
Then the full matrix associated with Scheme~2 is
\begin{equation}\label{eq:CAmatrix}
    C_A=\sum_{\alpha,\beta=1}^2 M_A^{\alpha\beta}R^{\alpha\beta}.
\end{equation}
Let
\begin{equation}\label{eq:scheme2_K2_F2}
    K_{2,h}:=(C_A)_{II},
    \qquad
    (F_2)_\ell=(f,\phi_\ell),\quad \ell\in I .
\end{equation}
After separating interior and boundary degrees of freedom, Scheme~2
becomes
\begin{equation}\label{eq:scheme2_matrix}
    K_{2,h}U_I
    =F_{2,I}-(C_A)_{IB}U_B .
\end{equation}
Compared with Scheme~1, Scheme~2 has a different algebraic structure. Its matrix contains the coefficient-weighted mass matrices $M_A^{\alpha\beta}$, which average the nodal recovered residual against standard finite element test functions. This averaging generally mixes the nodal rows of $K_A$ and therefore does not preserve the off-diagonal sign structure used in the maximum-principle and Schur-complement analysis of Section~\ref{sec:solvability}. Thus Scheme~2 should be viewed as a Galerkin-type recovered-residual formulation rather than as a matrix-level monotone discretization. In particular, the solvability arguments developed below for the nodal matrix do not apply directly to $K_{2,h}$. Scheme~2 is nevertheless retained because it is a standard finite element realization of the same recovered residual and is convenient for higher-order Lagrange spaces. Its algebraic behavior is assessed separately in Section~\ref{sec:numerics}.

\subsection{Scheme 3: Biorthogonal Petrov--Galerkin formulation for linear elements}

The third formulation is defined for linear Lagrange elements. In this case, the nodal residual admits a Petrov--Galerkin interpretation through a biorthogonal test space.

We construct the test functions locally. On each element \(T\in\mathcal T_h\), let \(\{\lambda_a^T\}_{a=1}^3\) be the barycentric basis of \(\mathbb P_1(T)\). We choose the local weights \(\mu_a^T=|T|/3\) and define
\begin{equation}\label{eq:P1_dual}
    \eta_a^T=4\lambda_a^T-1,
    \qquad a=1,2,3.
\end{equation}
Then
\begin{equation}\label{eq:local_biorthogonal}
    \int_T \lambda_c^T\eta_a^T\,dx=\mu_a^T\delta_{ac}.
\end{equation}
For a global vertex \(z_i\in\mathcal N_h\), define the corresponding global dual basis function \(\psi_i\) elementwise by
\begin{equation}\label{eq:global_dual}
    \psi_i|_T=
    \begin{cases}
    \eta_a^T,
    & \text{if } z_i \text{ corresponds to the local vertex } a \text{ of }T,\\
    0,
    & \text{if } z_i\notin T .
    \end{cases}
\end{equation}
Set
\begin{equation}\label{eq:Wh}
    W_h^1=\operatorname{span}\{\psi_i:z_i\in\mathcal N_h\}.
\end{equation}
The space \(W_h^1\) is generally discontinuous and is locally linear on each element. The construction yields the global biorthogonality relation
\begin{equation}\label{eq:global_biorthogonal}
    (\phi_i,\psi_j)=\mu_j\delta_{ij},
    \qquad
    \mu_j=\sum_{T\ni z_j}\mu_{a(j,T)}^T>0 .
\end{equation}

\paragraph{Discrete formulation.}

Let
\[
    W_{h,0}^1=\operatorname{span}\{\psi_i:z_i\in\mathcal N_h^I\}.
\]
Find $u_h\in V_{h,g}^1$ such that
\begin{equation}\label{eq:scheme3}
    (I_h^1(A:H_hu_h),w_h)=(f,w_h),
    \qquad \forall w_h\in W_{h,0}^1,
\end{equation}
where
\begin{equation}\label{eq:nodal_residual_interp}
    I_h^1(A:H_hu_h)
    =\sum_{i=1}^N \big(A(z_i):H_hu_h(z_i)\big)\phi_i
\end{equation}
is the Lagrange interpolation of the recovered residual. This
formulation is genuinely Petrov--Galerkin: the trial space is the
standard continuous finite element space $V_{h,g}^1$, while the test
space is the biorthogonal space $W_{h,0}^1$.

\paragraph{Matrix form.}
Taking $w_h=\psi_j$ in \eqref{eq:scheme3} and using the biorthogonality
relation \eqref{eq:global_biorthogonal} gives
\begin{equation}\label{eq:scheme3_node}
    \mu_j A(z_j):H_hu_h(z_j)=(f,\psi_j),
    \qquad z_j\in\mathcal N_h^I .
\end{equation}
Therefore Scheme~3 has the matrix form
\begin{equation}\label{eq:scheme3_matrix}
    K_{3,h}U_I
    =F_{3,I}-D_\mu(K_A)_{IB}U_B,
\end{equation}
where
\begin{equation}\label{eq:Dmu}
    K_{3,h}:=D_\mu K_{1,h},
    \qquad
    D_\mu=\diag(\mu_j)_{j\in I},
    \qquad
    (F_3)_j=(f,\psi_j),\quad j\in I .
\end{equation}
Thus Scheme~3 is a Petrov--Galerkin realization of the nodal scheme. Its
left-hand matrix is a positive diagonal row scaling of the Scheme~1
matrix. Consequently, Scheme~3 has the same sign-violation pattern as
Scheme~1 and inherits algebraic properties that are invariant under
positive diagonal row scaling.

\begin{remark}[Computation of the right-hand side]
No dual mesh is needed in the implementation. For the linear-element construction,
\begin{equation}\label{eq:rhs_dual}
    (f,\psi_j)=\sum_{T\ni z_j}\int_T f\,\eta_{a(j,T)}^T\,dx .
\end{equation}
Thus the right-hand side is assembled by standard element quadrature.
Equivalently, one may first compute the standard local load vector
\[
    F_b^T=\int_T f\ell_b^T\,dx,
\]
and then form
\[
    \widetilde F_a^T=\mu_a^T\sum_b(M_T^{-1})_{ba}F_b^T .
\]
\end{remark}

\begin{remark}[Relation among the three schemes]
The three schemes are built from the same recovered residual $A:H_hu_h-f$, but they enforce it in different ways. Scheme~1 enforces the residual at the interior nodes and leads to the nodal matrix $K_{1,h}$. Scheme~2 tests the recovered residual against standard finite element functions and leads to the Galerkin-type matrix $K_{2,h}$. Scheme~3 tests the interpolated residual against the biorthogonal test space and satisfies
\[
    K_{3,h}=D_\mu K_{1,h}.
\]
Thus Scheme~3 is a Petrov--Galerkin realization of the nodal scheme, up to a positive diagonal row scaling. This identity explains why the algebraic solvability theory in Section~\ref{sec:solvability} is developed for $K_{1,h}$: the corresponding nonsingularity results immediately carry over to $K_{3,h}$. The Galerkin-type matrix $K_{2,h}$, by contrast, contains mass-type averaging and therefore has a different sign structure from the nodal matrix. It is consequently treated as a natural recovered-residual finite element formulation whose algebraic behavior is assessed separately in the numerical experiments.
\end{remark}

\section{Algebraic Structure, Unique Solvability, and Conditioning}
\label{sec:solvability}

The three schemes introduced above are all constructed from the recovered residual $A:H_hu_h-f$, but they lead to different matrix structures. In this section we analyze the matrix that preserves the pointwise nodal structure of the recovered operator, namely the Scheme~1 matrix
\[
    K_{1,h}=(K_A)_{II}.
\]
The biorthogonal formulation satisfies $K_{3,h}=D_\mu K_{1,h}$ with a positive diagonal matrix $D_\mu$, and therefore has the same nonsingularity properties as $K_{1,h}$. By contrast, the Galerkin-type matrix $K_{2,h}$ contains the mass-type averaging induced by standard finite element testing and does not preserve the nodal sign structure used below.

For Hessian-recovery-based $C^0$ nonvariational discretizations, a standard coercive variational framework is not directly available, and the recovered nodal matrix need not inherit the global monotone structure of classical finite difference matrices. We therefore seek verifiable matrix-level criteria for nonsingularity. The analysis identifies two such criteria: a globally monotone regime based on a discrete maximum principle, and a localized sign-violation regime based on a Schur-complement test.

The globally monotone regime should be understood as a clean baseline case in which the recovered nodal operator retains an $M$-matrix type sign structure. This situation is observed on the uniform meshes in the numerical tests below. On general unstructured meshes, or even under mild perturbations of a uniform mesh, the recovered stencil may lose the global off-diagonal sign property while still preserving the row-sum identity. The localized Schur-complement criterion is designed for this latter situation: it does not require the full nodal matrix to be an $M$-matrix, but instead isolates the rows where the sign condition fails.

Throughout this section, we use the sign-reversed matrices $L_A=-K_A$ and $L_h=-K_{1,h}$. This convention corresponds to the elliptic operator $-A:D^2$ and is the natural sign convention for maximum-principle and $M$-matrix arguments.

\subsection{Row-sum identity}

We first record the matrix consequence of the constant-preserving
property of the Hessian recovery operator. The identity $H_h1=0$ was
proved in Lemma~\ref{lem:ppr_basic}; here we only translate it into the
row-sum structure of the recovered Hessian matrices.

\begin{lemma}[Row-sum identity]
\label{lem:row_sum_R}
For each $\alpha,\beta=1,2$,
\begin{equation}\label{eq:R_rowsum}
    R^{\alpha\beta}\mathbf 1=0.
\end{equation}
Consequently,
\begin{equation}\label{eq:KA_rowsum}
    K_A\mathbf 1=0,
    \qquad
    L_A\mathbf 1=0.
\end{equation}
\end{lemma}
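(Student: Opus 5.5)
The plan is to compute the $i$-th entry of $R^{\alpha\beta}\mathbf 1$ directly from the definition \eqref{eq:Rmatrix} and then invoke linearity of $H_h$ together with the constant-annihilation property \eqref{eq:constants_zero} from Lemma~\ref{lem:ppr_basic}.

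\textbf{Step 1 (the sum is the recovered Hessian of $1$).} For fixed $i$,
\[
    (R^{\alpha\beta}\mathbf 1)_i=\sum_{j=1}^N \bigl(H_h^{\alpha\beta}\phi_j\bigr)(z_i).
\]
By the linearity of $H_h$ established in Lemma~\ref{lem:ppr_basic}, this equals $\bigl(H_h^{\alpha\beta}\sum_j\phi_j\bigr)(z_i)$. This is simply \eqref{eq:R_action} with $U=\mathbf 1$.

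\textbf{Step 2 (partition of unity).} The Lagrange nodal basis of $V_h^k$ satisfies $\sum_{j}\phi_j\equiv 1$, because the constant function $1$ lies in $V_h^k$ and its interpolant has all nodal values equal to $1$. Hence $(R^{\alpha\beta}\mathbf 1)_i=(H_h^{\alpha\beta}1)(z_i)$.

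\textbf{Step 3 (apply \eqref{eq:constants_zero}).} Since $H_h1=0$, we have $(H_h^{\alpha\beta}1)(z_i)=0$ for every $i$, which gives \eqref{eq:R_rowsum}. An independent check is available: at each node the least-squares fit of the constant data $1$ on $\mathcal S_{z_i}$ is, by uniqueness (Assumption~\ref{ass:ppr}), the constant polynomial itself, and its Hessian vanishes.

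\textbf{Step 4 (pass to $K_A$ and $L_A$).} Using \eqref{eq:KAsum},
\[
    K_A\mathbf 1=\sum_{\alpha,\beta}D_{\alpha\beta}R^{\alpha\beta}\mathbf 1=0.
\]
Then $L_A\mathbf 1=-K_A\mathbf 1=0$.

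No step presents a real obstacle. The only point requiring care is that the identity concerns the full $N\times N$ matrices with all columns (interior and boundary) summed. It does not hold for the interior block $K_{1,h}$ alone, so the statement should be read, and used later, for $K_A$ and not for its principal submatrix.
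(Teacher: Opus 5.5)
Your proposal is correct and follows the paper's proof essentially verbatim: partition of unity plus linearity of $H_h$ reduces each row sum to $(H_h^{\alpha\beta}1)(z_i)=0$ by Lemma~\ref{lem:ppr_basic}, and the identity then passes to $K_A$ and $L_A$ through $K_A=\sum_{\alpha,\beta}D_{\alpha\beta}R^{\alpha\beta}$. Your closing caution is also apt: the paper uses the full-matrix identity, with the boundary columns included, in the maximum-principle arguments of Theorem~\ref{thm:unique_global_monotone} and Lemma~\ref{lem:good_block_nonsing}, and the interior block $K_{1,h}$ generally has nonzero row sums $-(K_A)_{IB}\mathbf 1$.
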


\begin{proof}
By the partition of unity property of the Lagrange basis,
$\sum_{j=1}^N\phi_j\equiv1$. Therefore, for every node $z_i$,
\[
    \sum_{j=1}^N R_{ij}^{\alpha\beta}
    =
    H_h^{\alpha\beta}\left(\sum_{j=1}^N\phi_j\right)(z_i)
    =
    H_h^{\alpha\beta}1(z_i).
\]
By Lemma~\ref{lem:ppr_basic}, $H_h1=0$. Hence
$R^{\alpha\beta}\mathbf 1=0$. Since
\[
    K_A=
    \sum_{\alpha,\beta=1}^2D_{\alpha\beta}R^{\alpha\beta},
\]
we obtain $K_A\mathbf 1=0$. Finally, $L_A=-K_A$ gives
$L_A\mathbf 1=0$.
\end{proof}

\begin{remark}
The row-sum identity is inherited from polynomial reproduction of the
recovery operator. It is independent of the coefficient values
$a_{\alpha\beta}(z_i)$ and should be distinguished from monotonicity.
The latter is a sign property of the matrix entries and depends on the
coefficient matrix, the mesh, and the recovery stencils.
\end{remark}

\subsection{Global monotonicity and unique solvability}

Motivated by the $M$-matrix structure of monotone discretizations of
$-\Delta$, we first identify a clean algebraic regime in which the
row-sum identity leads to a discrete maximum principle.

\begin{definition}[Globally monotone nodal matrix]
\label{def:global_monotone}
We say that the recovered nodal matrix is globally monotone if the
sign-reversed matrix $L_A=-K_A$ satisfies
\begin{equation}\label{eq:global_monotone_sign}
    (L_A)_{ij}\le0,
    \qquad
    i\in I,\quad j\in I\cup B,\quad j\ne i,
\end{equation}
and if its negative graph is connected to the Dirichlet boundary: for
every nonempty subset $S\subset I$, there exist $i\in S$ and
$j\in (I\setminus S)\cup B$ such that
\begin{equation}\label{eq:global_monotone_connectivity}
    (L_A)_{ij}<0.
\end{equation}
\end{definition}

The first condition is the $Z$-matrix sign condition for the positive
operator $L_A$. The second one rules out interior components disconnected
from the prescribed boundary data.

\begin{theorem}[Solvability criterion under global monotonicity]
\label{thm:unique_global_monotone}
If the recovered nodal matrix is globally monotone, then Scheme~1 and
Scheme~3 are uniquely solvable, and $L_h=-K_{1,h}$ is a nonsingular
$M$-matrix.
\end{theorem}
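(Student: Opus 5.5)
The plan is to derive everything from the row-sum identity of Lemma~\ref{lem:row_sum_R}, the sign condition \eqref{eq:global_monotone_sign}, and the connectivity condition \eqref{eq:global_monotone_connectivity}. This gives a discrete maximum principle for $L_h=-K_{1,h}=(L_A)_{II}$, which then yields both nonsingularity and the $M$-matrix property.

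\textbf{Step 1 (weak diagonal dominance).} Fix $i\in I$. From $L_A\mathbf 1=0$ we have
\[
    (L_A)_{ii}=-\sum_{j\ne i}(L_A)_{ij}=\sum_{j\ne i}|(L_A)_{ij}|\ge0,
\]
because every off-diagonal entry is nonpositive. Restricting to interior columns gives
\[
    (L_h)_{ii}-\sum_{j\in I,\,j\ne i}|(L_h)_{ij}|=\sum_{j\in B}|(L_A)_{ij}|\ge0 .
\]
So $L_h$ is a $Z$-matrix with nonnegative diagonal and weakly dominant rows. Strict dominance holds exactly in the rows with a negative boundary coupling.

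\textbf{Step 2 (injectivity by a maximum principle).} Suppose $L_hV=0$ with $V\ne0$. Replacing $V$ by $-V$ if needed, we may assume $M:=\max_{i\in I}V_i>0$. Let $S=\{i\in I:V_i=M\}$. Extend $V$ by zero on $B$, so that $(L_AV)_i=(L_hV)_i=0$ for $i\in I$. Using the row-sum identity, for each $i\in S$,
\[
    0=(L_AV)_i=\sum_{j\ne i}|(L_A)_{ij}|\,(M-V_j),
\]
where every term is nonnegative because $V_j\le M$ for $j\in I$ and $V_j=0<M$ for $j\in B$. Connectivity \eqref{eq:global_monotone_connectivity} gives some $i\in S$ and $j\in(I\setminus S)\cup B$ with $(L_A)_{ij}<0$. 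For this $j$ we have $M-V_j>0$, so the sum is strictly positive, a contradiction. Hence $L_h$ is nonsingular. Then $K_{1,h}$ is nonsingular, and so is $K_{3,h}=D_\mu K_{1,h}$ since $D_\mu$ is a positive diagonal matrix. Therefore Scheme~1 and Scheme~3 are uniquely solvable for any boundary data.

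\textbf{Step 3 ($M$-matrix property).} This is the step that needs the most care. I would show $L_h^{-1}\ge0$ entrywise by repeating the Step~2 argument for inequalities. Assume $L_hV\ge0$ and, for contradiction, $\min_I V<0$. Apply the argument of Step~2 to the set where $V$ attains its minimum, with boundary values zero. Every term now has a definite sign, and connectivity forces $(L_hV)_i<0$ for some $i$ in that set, contradicting $L_hV\ge0$. So $L_hV\ge0$ implies $V\ge0$, which means $L_h^{-1}\ge0$. A nonsingular $Z$-matrix with nonnegative inverse is a nonsingular $M$-matrix.

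An alternative route would cite the classical result that an irreducibly (or weakly chained) diagonally dominant $Z$-matrix with nonnegative diagonal is a nonsingular $M$-matrix. Connectivity supplies, from every interior row, a chain of nonzero entries ending at a strictly dominant row, and we need that chain to hold componentwise on each irreducible block. I would rather use the self-contained maximum-principle argument above. The main subtlety is checking that the contradiction in Steps~2 and~3 uses connectivity exactly as Definition~\ref{def:global_monotone} states it, namely for the set $S$ of extremal interior indices.
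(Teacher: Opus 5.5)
Your proposal is correct and follows essentially the same route as the paper. Both prove a discrete maximum principle for $L_h$ from the row-sum identity, the sign condition, and boundary connectivity applied to the extremal set, obtaining $L_hU\ge0\Rightarrow U\ge0$, hence nonsingularity, $L_h^{-1}\ge0$, and the $M$-matrix property, with Scheme~3 following from the positive diagonal scaling $D_\mu$. The differences are cosmetic: the paper proves the inequality version first and gets injectivity by applying it to $\pm U$, which makes your Step~2 redundant, and your Step~1 on weak diagonal dominance is not needed.
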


\begin{proof}
It suffices to prove a discrete maximum principle for $L_h$. Let $U\in\mathbb R^{|I|}$ and assume that $L_hU\ge0$. Extend $U$ to all nodes by setting $U_b=0$ for $b\in B$. Suppose, to the contrary, that $U$ attains a negative minimum at an interior node. Set
\[
    m=\min_{i\in I}U_i<0,
    \qquad
    S=\{i\in I:U_i=m\}.
\]
For $i\in S$, the full row-sum identity $L_A\mathbf1=0$ gives
\begin{equation}\label{eq:Lh_dmp_identity}
    (L_hU)_i
    =
    \sum_{\substack{j\in I\\ j\ne i}}
    -(L_A)_{ij}(U_i-U_j)
    +
    \sum_{b\in B}-(L_A)_{ib}U_i .
\end{equation}
Since $U_i\le U_j$ for all $j\in I$, $U_i<0$, and $(L_A)_{ij}\le0$ for all $j\ne i$, every term on the right-hand side is nonpositive. Hence $(L_hU)_i\le0$. Together with $L_hU\ge0$, this implies $(L_hU)_i=0$ for all $i\in S$, and therefore all the nonpositive terms above must vanish.

On the other hand, the boundary connectivity condition applied to the nonempty set $S$ yields $i\in S$ and $j\in (I\setminus S)\cup B$ such that $(L_A)_{ij}<0$. If $j\in I\setminus S$, then $U_j>U_i$, so the corresponding term
\[
    -(L_A)_{ij}(U_i-U_j)
\]
is strictly negative. If $j\in B$, then $U_j=0$ and $U_i<0$, so the boundary contribution
\[
    -(L_A)_{ij}U_i
\]
is strictly negative. In both cases we obtain a contradiction to the vanishing of all terms. Therefore no negative interior minimum is possible, and hence $U\ge0$.

Thus
\[
    L_hU\ge0\quad\Longrightarrow\quad U\ge0 .
\]
If $L_hU=0$, applying this implication to both $U$ and $-U$ gives $U=0$. Hence $L_h$ is nonsingular. Moreover, for any $r\ge0$, the solution $U=L_h^{-1}r$ satisfies $U\ge0$, so $L_h^{-1}\ge0$. Since $L_h$ is a $Z$-matrix, it is a nonsingular $M$-matrix.

Finally, $K_{1,h}=-L_h$, so Scheme~1 is uniquely solvable. Since $K_{3,h}=D_\mu K_{1,h}$ with $D_\mu$ positive diagonal, $K_{3,h}$ is nonsingular if and only if $K_{1,h}$ is nonsingular. Therefore Scheme~3 is also uniquely solvable.
\end{proof}

\subsection{Localized sign violations and a Schur-complement criterion}

The globally monotone regime gives a clean sufficient criterion, but it can be too restrictive on general unstructured meshes. In this setting
the recovered nodal matrix may fail to be a global $M$-matrix. In many
cases, however, the violations of the off-diagonal sign condition are
confined to a relatively small set of rows. This motivates a localized Schur-complement criterion: the sign-violating rows are separated from the monotone part of the matrix, and nonsingularity is tested on the resulting reduced matrix.

Define
\begin{equation}\label{eq:bad_good_sets}
    I_{\rm bad}
    =
    \{i\in I:\exists j\in I\cup B,
    \ j\ne i \text{ such that }(L_A)_{ij}>0\},
    \qquad
    I_{\rm good}=I\setminus I_{\rm bad}.
\end{equation}
For brevity, write $S=I_{\rm bad}$ and $G=I_{\rm good}$. After
reordering the unknowns according to $I=G\cup S$, we write
\begin{equation}\label{eq:block_L}
    L_h=
    \begin{pmatrix}
    L_{GG} & L_{GS}\\
    L_{SG} & L_{SS}
    \end{pmatrix},
\end{equation}
where the subscripts indicate the corresponding row and column index
sets.

The good rows satisfy the desired off-diagonal sign condition. We say
that the good block is connected to the exterior if for every nonempty
subset $T\subset G$, there exist $i\in T$ and
$j\in (G\setminus T)\cup S\cup B$ such that
\begin{equation}\label{eq:good_block_connectivity}
    (L_A)_{ij}<0.
\end{equation}
Here the bad rows and the Dirichlet boundary are both regarded as
exterior nodes for the good block.

\begin{lemma}[Nonsingularity of the good block]
\label{lem:good_block_nonsing}
If the good block is connected to the exterior, then $L_{GG}$ is a
nonsingular $M$-matrix.
\end{lemma}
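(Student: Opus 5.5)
The plan is to repeat the discrete maximum principle argument of Theorem~\ref{thm:unique_global_monotone}, now applied to the good block $L_{GG}$. The bad rows $S$ and the boundary nodes $B$ play the role of exterior nodes, and test vectors vanish there. First I will check that $L_{GG}$ is a $Z$-matrix. For $i\in G$, the definition of $I_{\rm bad}$ gives $(L_A)_{ij}\le 0$ for every $j\in I\cup B$ with $j\ne i$, and in particular for $j\in G$. Lemma~\ref{lem:row_sum_R} gives $L_A\mathbf 1=0$, so for each $i\in G$
\[
    (L_A)_{ii}=\sum_{j\ne i}-(L_A)_{ij}\ge 0 .
\]

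Next, take $V\in\mathbb R^{|G|}$ with $L_{GG}V\ge0$, and extend it by zero to $S\cup B$. For $i\in G$ the row-sum identity gives
\[
    (L_{GG}V)_i
    =\sum_{\substack{j\in G\\ j\ne i}}-(L_A)_{ij}(V_i-V_j)
    +\sum_{j\in S\cup B}-(L_A)_{ij}V_i .
\]
This uses $(L_{GG}V)_i=\sum_{j\in G}(L_A)_{ij}V_j$ together with $(L_A)_{ii}=-\sum_{j\ne i}(L_A)_{ij}$, where the sum runs over all of $I\cup B$. Rows of $L_A$ also have entries in boundary columns, and the row-sum identity covers those columns as well.

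Suppose $m=\min_G V<0$, and let $T=\{i\in G: V_i=m\}$. For $i\in T$ every term on the right is nonpositive, so $(L_{GG}V)_i\le0$. Combined with $L_{GG}V\ge0$, every term must vanish. The connectivity hypothesis applied to $T$ gives $i\in T$ and $j\in(G\setminus T)\cup S\cup B$ with $(L_A)_{ij}<0$. If $j\in G\setminus T$, then $V_j>V_i$ and the corresponding term is strictly negative. If $j\in S\cup B$, the exterior term $-(L_A)_{ij}V_i$ is strictly negative because $V_i<0$. Either case is a contradiction, so $V\ge0$.

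Applying this to $\pm V$ shows that $L_{GG}V=0$ forces $V=0$, so $L_{GG}$ is nonsingular. Taking $V=L_{GG}^{-1}r$ with $r\ge0$ shows $L_{GG}^{-1}\ge0$. A nonsingular $Z$-matrix with a nonnegative inverse is a nonsingular $M$-matrix.

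The only point that needs care is that the bad nodes $S$ can be treated exactly like Dirichlet nodes. For that, the entries $(L_A)_{ij}$ with $i\in G$ and $j\in S$ must be nonpositive, and the definition of $I_{\rm bad}$, which quantifies over all $j\in I\cup B$, guarantees this. No sign information about the bad rows themselves is needed, because only the rows of $L_{GG}$ enter the argument.
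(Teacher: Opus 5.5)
Your proposal is correct and follows essentially the same argument as the paper: a discrete maximum principle for $L_{GG}$ with test vectors extended by zero on $S\cup B$, the full row-sum identity $L_A\mathbf 1=0$, a contradiction from exterior connectivity, and then the $Z$-matrix plus nonnegative-inverse characterization. You also explicitly check that good rows have nonpositive entries in the $S$ columns, so the bad nodes behave like Dirichlet nodes; the paper uses this point without stating it.
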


\begin{proof}
We prove a discrete maximum principle for $L_{GG}$. Let $U\in\mathbb R^{|G|}$ satisfy $L_{GG}U\ge0$. Extend $U$ by zero on $S\cup B$. Suppose that $U$ attains a negative minimum on $G$. Set
\[
    m=\min_{i\in G}U_i<0,
    \qquad
    T=\{i\in G:U_i=m\}.
\]
For $i\in T$, the row-sum identity for the full matrix $L_A$, together with the good-row sign condition, gives
\begin{equation}\label{eq:good_block_dmp_identity}
    (L_{GG}U)_i
    =
    \sum_{\substack{j\in G\\ j\ne i}}
    -(L_A)_{ij}(U_i-U_j)
    +
    \sum_{j\in S\cup B}-(L_A)_{ij}U_i
    \le0.
\end{equation}
Since $L_{GG}U\ge0$, all terms above must vanish for every $i\in T$.

The exterior connectivity condition gives $i\in T$ and $j\in (G\setminus T)\cup S\cup B$ such that $(L_A)_{ij}<0$. If $j\in G\setminus T$, then $U_j>U_i$, and the corresponding interior term is strictly negative. If $j\in S\cup B$, then the extended value is zero, while $U_i<0$, and the exterior contribution is strictly negative. Both cases contradict the vanishing of all terms. Hence no negative minimum exists, and $U\ge0$.

Therefore
\[
    L_{GG}U\ge0\quad\Longrightarrow\quad U\ge0 .
\]
Applying this implication to both $U$ and $-U$ when $L_{GG}U=0$ gives $U=0$, so $L_{GG}$ is nonsingular. Moreover, $L_{GG}^{-1}\ge0$. Since $L_{GG}$ is a $Z$-matrix, it is a nonsingular $M$-matrix.
\end{proof}

Since $L_{GG}$ is nonsingular under this condition, the Schur complement
associated with the sign-violating rows is well defined:
\begin{equation}\label{eq:schur}
    \mathfrak S_h
    =
    L_{SS}-L_{SG}L_{GG}^{-1}L_{GS}.
\end{equation}

\begin{theorem}[Localized Schur-complement solvability criterion]
\label{thm:schur_unique}
Assume that the good block is connected to the exterior. If the Schur
complement $\mathfrak S_h$ is nonsingular, then Scheme~1 and Scheme~3
are uniquely solvable.
\end{theorem}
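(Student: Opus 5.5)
The plan is to reduce nonsingularity of $L_h$ to that of the two diagonal blocks in the standard block-$LU$ (Schur) factorization. First, by Lemma~\ref{lem:good_block_nonsing}, the connectivity hypothesis gives that $L_{GG}$ is a nonsingular $M$-matrix. In particular $L_{GG}^{-1}$ exists, and $\mathfrak S_h$ in \eqref{eq:schur} is well defined. Ordering the unknowns as $I=G\cup S$ as in \eqref{eq:block_L}, we then write
\[
    L_h=
    \begin{pmatrix}
    I_G & 0\\
    L_{SG}L_{GG}^{-1} & I_S
    \end{pmatrix}
    \begin{pmatrix}
    L_{GG} & 0\\
    0 & \mathfrak S_h
    \end{pmatrix}
    \begin{pmatrix}
    I_G & L_{GG}^{-1}L_{GS}\\
    0 & I_S
    \end{pmatrix},
\]
which can be checked by direct multiplication. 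The outer factors are unit block-triangular and hence invertible. Consequently
\[
    \det L_h=\det L_{GG}\,\det\mathfrak S_h ,
\]
so $L_h$ is nonsingular exactly when $\mathfrak S_h$ is, given that $L_{GG}$ is nonsingular.

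Equivalently, and perhaps more transparently in the write-up, we can argue by elimination. Suppose $L_hU=0$ with $U=(U_G,U_S)$. The $G$-rows give $U_G=-L_{GG}^{-1}L_{GS}U_S$. Substituting into the $S$-rows gives $\mathfrak S_hU_S=0$, hence $U_S=0$ and then $U_G=0$.

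Having established that $L_h$ is nonsingular, $K_{1,h}=-L_h$ is nonsingular, so Scheme~1 \eqref{eq:scheme1_matrix} has a unique solution for any data. For Scheme~3, $K_{3,h}=D_\mu K_{1,h}$ by \eqref{eq:Dmu}, and $D_\mu$ is diagonal with entries $\mu_j>0$ from \eqref{eq:global_biorthogonal}. Hence $K_{3,h}$ is nonsingular as well, and \eqref{eq:scheme3_matrix} is uniquely solvable.

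I do not expect a genuine obstacle here. The only substantive ingredient, the invertibility of $L_{GG}$, was already proved in Lemma~\ref{lem:good_block_nonsing} using the row-sum identity of Lemma~\ref{lem:row_sum_R}. The one point to handle carefully is bookkeeping: the factorization is carried out after the symmetric permutation that reorders $I$ into $G\cup S$. A permutation similarity $P^TL_hP$ does not affect nonsingularity, so this reordering is harmless and is simply noted.
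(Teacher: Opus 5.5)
Your proof is correct and follows essentially the same route as the paper. Both arguments use Lemma~\ref{lem:good_block_nonsing} to obtain $L_{GG}$ invertible, factor $L_h$ block-triangularly with $\mathfrak S_h$ as the remaining diagonal block (the paper uses the two-factor block $LU$ form rather than your three-factor $LDU$ form), and transfer nonsingularity to $K_{1,h}=-L_h$ and $K_{3,h}=D_\mu K_{1,h}$.
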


\begin{proof}
By Lemma~\ref{lem:good_block_nonsing}, $L_{GG}$ is a nonsingular $M$-matrix and, in particular, is invertible. Hence the Schur complement $\mathfrak S_h$ is well defined. With respect to the decomposition $I=G\cup S$, the block matrix $L_h$ admits the factorization
\[
L_h
=
\begin{pmatrix}
L_{GG} & L_{GS}\\
L_{SG} & L_{SS}
\end{pmatrix}
=
\begin{pmatrix}
I & 0\\
L_{SG}L_{GG}^{-1} & I
\end{pmatrix}
\begin{pmatrix}
L_{GG} & L_{GS}\\
0 & \mathfrak S_h
\end{pmatrix}.
\]
Both factors on the right-hand side are nonsingular if $L_{GG}$ and $\mathfrak S_h$ are nonsingular. Therefore $L_h$ is nonsingular. Since $K_{1,h}=-L_h$, Scheme~1 is uniquely solvable. Finally, $K_{3,h}=D_\mu K_{1,h}$ with $D_\mu$ positive diagonal, so $K_{3,h}$ is nonsingular if and only if $K_{1,h}$ is nonsingular. Thus Scheme~3 is uniquely solvable.
\end{proof}

\begin{remark}\label{rem:schur_criterion}
The Schur-complement criterion provides a localized algebraic solvability test. When the global off-diagonal sign condition fails only in a subset of rows, the monotone part of the matrix is retained in the good block $L_{GG}$, while the influence of the sign-violating rows is represented by the reduced matrix $\mathfrak S_h$. Thus the full nodal matrix need not be an $M$-matrix; the test instead determines whether the localized sign violations destroy nonsingularity after the good degrees of freedom have been eliminated.

It is important to note that this criterion is an assembled-matrix, a posteriori algebraic certificate. It is not intended to provide a purely geometric mesh condition or a coefficient-level sufficient condition for monotonicity. Rather, once the recovered nodal matrix has been assembled, the criterion checks whether the localized sign violations have destroyed the invertibility of the operator after the good degrees of freedom have been eliminated.

This interpretation is consistent with a standard perturbation viewpoint. Indeed,
\[
    \mathfrak S_h=L_{SS}-L_{SG}L_{GG}^{-1}L_{GS}
\]
can be regarded as the bad-row block $L_{SS}$ perturbed by the coupling through the monotone block $L_{GG}$. In particular, if $L_{SS}$ is nonsingular and, in a subordinate matrix norm,
\[
    \bigl\|L_{SS}^{-1}L_{SG}L_{GG}^{-1}L_{GS}\bigr\|<1,
\]
then $\mathfrak S_h$ is nonsingular by the Neumann-series argument. This condition is not imposed as a diagnostic requirement below; rather, it illustrates one perturbative regime in which localized sign violations do not destroy nonsingularity. The numerical diagnostics therefore focus on the direct Schur-complement quantities reported for $\mathfrak S_h$.
\end{remark}

\subsection{Uniform inverse bound and condition-number estimate under global monotonicity}

We next derive a uniform inverse bound and a condition-number estimate in
the globally monotone regime. These two estimates use different parts of
the algebraic structure. The inverse bound follows from the discrete
maximum principle together with a quadratic barrier argument. The
\(O(h^{-2})\) condition-number estimate additionally uses a stencil-scaling
bound for the recovered Hessian on uniform meshes with fixed recovery
patches.

For the stencil-scaling estimate, we consider a family of uniform meshes
with fixed local recovery patterns. In this setting the scaled
least-squares systems are identical up to translation, rotation, and a
finite number of boundary patch types. Hence the recovery stencils are
uniformly stable, and the following bound follows from scaling.

\begin{lemma}[Stencil scaling on uniform meshes]
\label{lem:L_norm_bound}
On a family of uniform meshes with fixed local recovery patches, there
exists a constant $C$, independent of $h$, such that
\begin{equation}\label{eq:ppr_stencil_bound}
    \sum_{j=1}^N |R_{ij}^{\alpha\beta}|
    \le Ch^{-2},
    \qquad
    i=1,\ldots,N,
    \quad
    \alpha,\beta=1,2.
\end{equation}
Consequently, if $A\in [L^\infty(\Omega)]^{2\times2}$, then
\begin{equation}\label{eq:L_norm_bound}
    \|L_h\|_\infty\le Ch^{-2}.
\end{equation}
\end{lemma}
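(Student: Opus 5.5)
The plan is a scaling argument at the level of a single reference patch, followed by a row-wise triangle inequality for the coefficient-weighted matrix.

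\textbf{Step 1: the recovery row as a fixed functional on sampled values.} Fix a node $z_i$. By \eqref{eq:ppr_ls}--\eqref{eq:ppr_component_node} and Lemma~\ref{lem:ppr_basic}, $H_h^{\alpha\beta}v_h(z_i)$ is a linear functional of the sampled values $\{v_h(\tilde z)\}_{\tilde z\in\mathcal S_{z_i}}$. Hence there are weights $w_{i,\tilde z}^{\alpha\beta}$ with
\[
    H_h^{\alpha\beta}v_h(z_i)=\sum_{\tilde z\in\mathcal S_{z_i}} w_{i,\tilde z}^{\alpha\beta}\,v_h(\tilde z).
\]
Taking $v_h=\phi_j$ and using $\phi_j(\tilde z)=\delta_{z_j\tilde z}$, I get $R_{ij}^{\alpha\beta}=w_{i,z_j}^{\alpha\beta}$ when $z_j\in\mathcal S_{z_i}$, and $R^{\alpha\beta}_{ij}=0$ otherwise. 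So
\[
    \sum_j|R_{ij}^{\alpha\beta}|=\sum_{\tilde z\in\mathcal S_{z_i}}|w^{\alpha\beta}_{i,\tilde z}|.
\]

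\textbf{Step 2: scaling of the weights.} Map the patch to a reference configuration by $x=z_i+h\hat x$. Write the fitted polynomial in scaled monomials $\hat x^\gamma$, $|\gamma|\le k+1$. The least-squares design matrix $\hat V$ then depends only on the reference pattern and is independent of $h$. It has full column rank by Assumption~\ref{ass:ppr}.

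The reference fit gives $\hat p=(\hat V^T\hat V)^{-1}\hat V^T\mathbf v$. Since $D^2_x=h^{-2}D^2_{\hat x}$, I obtain
\[
    w^{\alpha\beta}_{i,\cdot}=h^{-2}\,\partial_{\hat\alpha\hat\beta}\hat p\text{-functional evaluated at }\hat 0 .
\]
That is, $w^{\alpha\beta}_{i,\cdot}=h^{-2}\hat w^{\alpha\beta}$, where $\hat w^{\alpha\beta}$ is determined solely by the reference pattern.

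On uniform meshes with fixed recovery patterns, only finitely many reference patterns occur, up to rigid motions. These are the interior pattern and finitely many boundary patch types. Rotations act boundedly on the Hessian components, so $\max|\hat w|$ over all patterns is a finite constant $\hat C$. The pattern sizes $|\mathcal S_{z_i}|$ are also bounded. This yields \eqref{eq:ppr_stencil_bound}.

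\textbf{Step 3: the bound for $L_h$.} By \eqref{eq:KAdef}, for $i\in I$,
\[
    \sum_{j\in I}|(L_h)_{ij}|\le\sum_{j=1}^N|(K_A)_{ij}|\le\sum_{\alpha,\beta}|a_{\alpha\beta}(z_i)|\sum_j|R^{\alpha\beta}_{ij}|\le 4\max_{\alpha\beta}\|a_{\alpha\beta}\|_{L^\infty}Ch^{-2}.
\]
Here I use that the nodal values are bounded by the $L^\infty$ norm, per the paper's standing assumption that nodal values of $A$ are well defined. Taking the maximum over rows gives \eqref{eq:L_norm_bound}.

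\textbf{Main obstacle.} The delicate point is justifying rigorously that the weight sets form a finite family independent of $h$. This needs the uniform-mesh-with-fixed-patterns hypothesis so that the scaled design matrices coincide up to translation and rotation. It also needs care with rotations: the weights transform under orthogonal changes of variables, but remain uniformly bounded since $|Q^TMQ|\le|M|$ entrywise up to a factor of 4. Boundary patch types being finitely many is taken directly from the hypothesis.
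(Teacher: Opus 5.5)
Your proposal is correct and follows essentially the same route as the paper. Each recovered value $H_h^{\alpha\beta}v_h(z_i)$ is a linear functional on the finitely many sampled values in $\mathcal S_{z_i}$, scaling to a reference patch produces the factor $h^{-2}$ with weights drawn from finitely many patch types, and the bound on $\|L_h\|_\infty$ then follows row by row from the definition of $K_A$ and bounded nodal coefficients. You make explicit what the paper leaves implicit: the identification $R^{\alpha\beta}_{ij}=w^{\alpha\beta}_{i,z_j}$, the $h$-independence of the scaled design matrix, and the harmless effect of rotations.
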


\begin{proof}
For each fixed patch type, the recovered value
$(H_h^{\alpha\beta}v_h)(z_i)$ is a linear functional of finitely many
sampled nodal values. After scaling the patch to unit size, the
coefficients of this functional are bounded independently of $h$. Scaling
back to the physical patch introduces the factor $h^{-2}$, because a
second derivative is being recovered. Since only finitely many patch
types occur on the uniform mesh family, the stencil bound follows.

The bound for $L_h$ follows from
\[
    (K_A)_{ij}
    =
    \sum_{\alpha,\beta=1}^2
    a_{\alpha\beta}(z_i)R^{\alpha\beta}_{ij}
\]
and the boundedness of the coefficients. Since $L_h=-K_{1,h}$, the same
bound holds for $L_h$.
\end{proof}

\begin{theorem}[Uniform inverse bound and condition-number estimate under global monotonicity]
\label{thm:cond_monotone}
Assume that the recovered nodal matrix is globally monotone. Then the
sign-reversed nodal matrix $L_h=-K_{1,h}$ satisfies the uniform inverse
bound
\begin{equation}\label{eq:inverse_bound_monotone}
    \|L_h^{-1}\|_\infty\le C,
\end{equation}
where $C$ is independent of $h$. In the uniform-mesh setting with fixed
recovery patches, the stencil-scaling estimate further gives
\begin{equation}\label{eq:L_norm_monotone}
    \|L_h\|_\infty\le Ch^{-2},
\end{equation}
and therefore
\begin{equation}\label{eq:cond_monotone}
    \kappa_\infty(L_h)\le Ch^{-2}.
\end{equation}
\end{theorem}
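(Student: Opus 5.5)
We will prove the inverse bound by a discrete comparison argument built on the maximum principle from Theorem~\ref{thm:unique_global_monotone}. That theorem gives that $L_h$ is a nonsingular $M$-matrix, so $L_h^{-1}\ge0$ entrywise. Consequently,
\[
\|L_h^{-1}\|_\infty=\|L_h^{-1}\mathbf 1\|_\infty .
\]
It therefore suffices to find a vector $W\in\mathbb R^{|I|}$ with $L_hW\ge\mathbf 1$ and $\|W\|_\infty\le C$ independent of $h$. Indeed, monotonicity then gives $0\le L_h^{-1}\mathbf 1\le W$.

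The barrier will be a quadratic. Let $x_0$ be a point of $\Omega$ and $d=\operatorname{diam}\Omega$, and take
\[
q(x)=\frac{d^2-|x-x_0|^2}{2\lambda_0}.
\]
Then $q\ge0$ on $\overline\Omega$ and $\|q\|_{L^\infty}\le d^2/(2\lambda_0)$. Since $q\in\mathbb P_2\subset\mathbb P_{k+1}$ for every $k\ge1$, Lemma~\ref{lem:ppr_basic} gives exactly
\[
H_h\Pih q(z_i)=D^2q=-\lambda_0^{-1}I_2 .
\]
Hence, with $Q$ the full nodal vector of $q$,
\[
(L_AQ)_i=-A(z_i):H_h\Pih q(z_i)=\lambda_0^{-1}\operatorname{tr}A(z_i)\ge 2 .
\]
The last inequality uses \eqref{eq:ellipticity}, which gives $\operatorname{tr}A\ge2\lambda_0$. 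This is the only point where the ellipticity constant enters, and it requires the nodal values of $A$ to satisfy \eqref{eq:ellipticity}.

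Next we pass from the full matrix to $L_h$. Split $Q=(Q_I,Q_B)$. Then
\[
L_hQ_I=(L_AQ)_I-(L_A)_{IB}Q_B .
\]
Global monotonicity gives $(L_A)_{ib}\le0$ for $b\in B$, and $Q_B\ge0$, so $-(L_A)_{IB}Q_B\ge0$. Therefore $L_hQ_I\ge 2\cdot\mathbf 1$. Setting $W=Q_I/2$, the comparison principle yields
\[
\|L_h^{-1}\|_\infty\le \|q\|_{L^\infty}/2\le d^2/(4\lambda_0),
\]
which is independent of $h$ and proves \eqref{eq:inverse_bound_monotone}. The key observations are that the exact quadratic reproduction of PPR makes the consistency of the barrier exact, with no $O(h^k)$ error, and that the boundary terms have the right sign because of the $Z$-matrix structure together with the nonnegativity of the barrier.

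Finally, \eqref{eq:L_norm_monotone} is exactly Lemma~\ref{lem:L_norm_bound} in the uniform-mesh setting. Multiplying this bound by the inverse bound gives
\[
\kappa_\infty(L_h)=\|L_h\|_\infty\|L_h^{-1}\|_\infty\le Ch^{-2},
\]
which is \eqref{eq:cond_monotone}. The main step to get right is the barrier construction, namely checking that the sign of the boundary coupling term is favorable. This depends on choosing $q\ge0$ on the boundary nodes, which the centered quadratic ensures.
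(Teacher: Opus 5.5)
Your proposal is correct and follows essentially the same route as the paper: a nonnegative quadratic barrier reproduced exactly by PPR, the favorable sign of $-(L_A)_{IB}Q_B$ from global monotonicity, $L_h^{-1}\ge0$ from Theorem~\ref{thm:unique_global_monotone}, and Lemma~\ref{lem:L_norm_bound} for $\|L_h\|_\infty$. The differences are cosmetic and merely make the constant explicit as $d^2/(4\lambda_0)$: you scale $q$ by $1/(2\lambda_0)$, and you use $\|L_h^{-1}\|_\infty=\|L_h^{-1}\mathbf 1\|_\infty$ instead of the paper's bound $|U|\le\|r\|_\infty L_h^{-1}\mathbf 1$.
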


\begin{proof}
We first prove the inverse bound, which uses only the globally monotone
algebraic structure. Choose $x_0\in\mathbb R^2$ and $R>0$, independent of
$h$, such that $\Omega\subset B_R(x_0)$, and set
\[
    q(x)=R^2-|x-x_0|^2.
\]
Then $q\ge0$ on $\overline\Omega$, and $\|q\|_{L^\infty(\Omega)}$ is
bounded independently of $h$. Since $q\in\mathbb P_2$ and $k\ge1$, the
polynomial reproduction property gives
\[
    H_h\Pi_hq=D^2q=-2I.
\]
Therefore, at every interior node $z_i$,
\[
    (L_A\Pi_hq)(z_i)
    =
    -A(z_i):H_h\Pi_hq(z_i)
    =
    2\,\operatorname{tr}A(z_i)
    \ge c_0,
\]
where $c_0>0$ depends only on the ellipticity constant.

Let $q_I$ and $q_B$ denote the interior and boundary nodal values of
$\Pi_hq$. Since the boundary off-diagonal entries of $L_A$ are nonpositive
in the globally monotone regime and $q_B\ge0$, we have
\[
    L_hq_I
    =
    (L_A\Pi_hq)_I-(L_A)_{IB}q_B
    \ge c_0\mathbf 1.
\]
By Theorem~\ref{thm:unique_global_monotone}, $L_h^{-1}\ge0$. Hence
\[
    L_h^{-1}\mathbf 1\le c_0^{-1}q_I .
\]
For any $r\in\mathbb R^{|I|}$, let $U=L_h^{-1}r$. Then
\[
    |U|
    \le
    L_h^{-1}|r|
    \le
    \|r\|_\infty L_h^{-1}\mathbf 1
    \le
    c_0^{-1}\|r\|_\infty q_I .
\]
Since $q_I$ is uniformly bounded, it follows that
$\|U\|_\infty\le C\|r\|_\infty$. Thus
$\|L_h^{-1}\|_\infty\le C$.

In the uniform-mesh setting with fixed recovery patches,
Lemma~\ref{lem:L_norm_bound} gives $\|L_h\|_\infty\le Ch^{-2}$.
Combining this estimate with the inverse bound yields
\[
    \kappa_\infty(L_h)
    =
    \|L_h\|_\infty\|L_h^{-1}\|_\infty
    \le Ch^{-2}.
\]
\end{proof}

\begin{remark}
The uniform inverse bound in Theorem~\ref{thm:cond_monotone} follows from
the globally monotone algebraic structure and the quadratic barrier
argument. The uniform-mesh assumption is used for the stencil-scaling
estimate of $\|L_h\|_\infty$, and hence for the condition-number bound.
The same condition-number scaling applies to the biorthogonal matrix
$D_\mu L_h$, provided the weights $\mu_j$ are mutually comparable on the
mesh family. On unstructured meshes with localized sign violations, the
estimate above is not proved here. The numerical diagnostics in
Section~\ref{sec:numerics} nevertheless indicate similar
$O(h^{-2})$-type growth in the tested cases when the sign violations
remain localized and the associated Schur complements remain nonsingular.
\end{remark}

\section{Consistency of the Recovered Residual}\label{sec:consistency}

The preceding section establishes matrix-level solvability mechanisms for the recovered nodal operator. These results are based on the sign structure of the nodal matrix and are therefore distinct from standard coercive Galerkin stability or inf-sup arguments. We now record the consistency part of the recovered-residual framework.

Let $u$ be the exact solution of
\[
    A:D^2u=f,
\]
and let $\Pih u=I_h^ku$ be its Lagrange interpolant. The recovered residual obtained by inserting the interpolated exact solution into the discrete operator is
\[
    r_h(u):=A:H_h\Pih u-f.
\]
Since $f=A:D^2u$, we have
\[
    r_h(u)=A:(H_h\Pih u-D^2u).
\]
Thus the recovered residual is controlled directly by the Hessian recovery error. Using Lemma~\ref{lem:hess_recovery_estimate}, we obtain the following basic estimate.

\begin{lemma}[Recovered residual consistency]\label{lem:recovered_residual_consistency}
Assume that $A\in L^\infty(\Omega)^{2\times2}$, $u\in W^{k+2,\infty}(\Omega)$, and the assumptions of Lemma~\ref{lem:hess_recovery_estimate} hold. Then
\begin{equation}\label{eq:recovered_residual_consistency}
    \|A:H_h\Pih u-f\|_{0,\Omega}
    \le
    Ch^k\|u\|_{W^{k+2,\infty}(\Omega)}.
\end{equation}
\end{lemma}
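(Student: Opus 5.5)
The plan is to reduce the residual directly to the Hessian recovery error and then invoke Lemma~\ref{lem:hess_recovery_estimate}. Since $u$ solves $A:D^2u=f$ almost everywhere, I first rewrite the residual pointwise as
\[
    A:H_h\Pih u-f=A:\bigl(H_h\Pih u-D^2u\bigr)
    \quad\text{a.e. in }\Omega.
\]
This identity is meaningful because $u\in W^{k+2,\infty}(\Omega)$ with $k\ge1$ gives $D^2u\in W^{k,\infty}\subset C^0(\overline\Omega)$. In addition, $H_h\Pih u$ is a matrix-valued finite element function, so every term is a well-defined $L^2$ function.

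Next, I use the elementary bound for the Frobenius product, $|A:M|\le |A|\,|M|$, with $|A|$ the Frobenius norm. Since $A\in L^\infty(\Omega)^{2\times2}$, this gives, almost everywhere,
\[
    |A:(H_h\Pih u-D^2u)|\le \|A\|_{L^\infty(\Omega)}\,|H_h\Pih u-D^2u| .
\]
Squaring, integrating over $\Omega$, and taking square roots then yields
\[
    \|A:H_h\Pih u-f\|_{0,\Omega}\le \|A\|_{L^\infty}\|D^2u-H_h\Pih u\|_{0,\Omega}.
\]
The uniform ellipticity bound \eqref{eq:ellipticity} also controls $\|A\|_{L^\infty}$ by $C\Lambda_0$, so the constant depends only on the coefficient bound.

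The final step applies the $L^2$ recovery estimate \eqref{eq:hess_recovery_estimate} from Lemma~\ref{lem:hess_recovery_estimate}, whose hypotheses (shape regularity, quasi-uniformity, scaled patch stability) are assumed in the statement. This gives $Ch^k\|u\|_{W^{k+2,\infty}(\Omega)}$ and completes the proof.

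There is no genuine obstacle. The only point needing care is that $f$ here means the exact right-hand side $A:D^2u$, not a nodal approximation of it, so the identity in the first step holds exactly. The possible discontinuity of $A$ is harmless, because only $L^\infty$ bounds and a pointwise-a.e. product are used.
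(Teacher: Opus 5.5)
Your proof is correct and follows the paper's argument: you rewrite the residual as $A:(H_h\Pih u-D^2u)$, bound it pointwise by $\|A\|_{L^\infty(\Omega)}\,|H_h\Pih u-D^2u|$, and then invoke the $L^2$ recovery estimate \eqref{eq:hess_recovery_estimate} of Lemma~\ref{lem:hess_recovery_estimate}. Your side remarks on the continuity of $D^2u$ and on bounding $\|A\|_{L^\infty}$ via $\Lambda_0$ are harmless additions.
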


\begin{proof}
Since $f=A:D^2u$,
\[
    A:H_h\Pih u-f
    =
    A:(H_h\Pih u-D^2u).
\]
By the pointwise contraction bound for matrices,
\[
    |A:(H_h\Pih u-D^2u)|
    \le
    |A|\,|H_h\Pih u-D^2u|.
\]
Therefore,
\[
    \|A:H_h\Pih u-f\|_{0,\Omega}
    \le
    \|A\|_{L^\infty(\Omega)}
    \|H_h\Pih u-D^2u\|_{0,\Omega}.
\]
The desired estimate follows from the $L^2$ recovery estimate in Lemma~\ref{lem:hess_recovery_estimate}.
\end{proof}

\begin{proposition}[Consistency of the recovered-residual formulations]\label{prop:formulation_consistency}
Under the assumptions of Lemma~\ref{lem:recovered_residual_consistency}, the following consistency estimates hold.

\begin{enumerate}[label=\emph{(\roman*)},leftmargin=2.5em]
\item \emph{Scheme~1.} Assume that the coefficient values used in the nodal scheme are uniformly bounded at the interior nodes. Then
\begin{equation}\label{eq:scheme1_nodal_consistency}
    \max_{z_i\in\mathcal N_h^I}
    \left|
    A(z_i):H_h\Pih u(z_i)-f(z_i)
    \right|
    \le
    Ch^k\|u\|_{W^{k+2,\infty}(\Omega)}.
\end{equation}

\item \emph{Scheme~2.} For all $v_h\in V_{h,0}^k$,
\begin{equation}\label{eq:scheme2_consistency}
    \left|
    (A:H_h\Pih u,v_h)-(f,v_h)
    \right|
    \le
    Ch^k\|u\|_{W^{k+2,\infty}(\Omega)}
    \|v_h\|_{0,\Omega}.
\end{equation}

\item \emph{Scheme~3.} Since Scheme~3 is restricted to linear elements, let $\Pih u=I_h^1u$, and define
\begin{equation}\label{eq:delta_int_def}
    \delta_h^{\rm int}(u)
    :=
    \|I_h^1(A:H_h\Pih u)-A:H_h\Pih u\|_{0,\Omega}.
\end{equation}
Then, for all $w_h\in W_{h,0}^1$,
\begin{equation}\label{eq:scheme3_consistency}
    \left|
    (I_h^1(A:H_h\Pih u),w_h)-(f,w_h)
    \right|
    \le
    \left(
    Ch\|u\|_{W^{3,\infty}(\Omega)}
    +
    \delta_h^{\rm int}(u)
    \right)
    \|w_h\|_{0,\Omega}.
\end{equation}
\end{enumerate}
\end{proposition}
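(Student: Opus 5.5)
The three estimates all follow from the identity $f=A:D^2u$, which turns each residual into a contraction of $A$ against the Hessian recovery error. The nodal case uses the pointwise recovery bound, and the weak cases use the $L^2$ bound of Lemma~\ref{lem:recovered_residual_consistency} together with Cauchy--Schwarz.

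\emph{Scheme~1.} At an interior node $z_i$, write $f(z_i)=A(z_i):D^2u(z_i)$, so that
\[
A(z_i):H_h\Pih u(z_i)-f(z_i)=A(z_i):\bigl(H_h\Pih u(z_i)-D^2u(z_i)\bigr).
\]
Here one must interpret the nodal data consistently: the nodal values of $A$ and $f$ are those used in the scheme, and they satisfy the equation pointwise at the nodes. The pointwise contraction bound then gives $|A(z_i)|\,|H_h\Pih u(z_i)-D^2u(z_i)|$. By the assumed uniform bound on the nodal coefficients and the nodal recovery estimate \eqref{eq:nodal_hessian_recovery}, this is at most $Ch^k|u|_{W^{k+2,\infty}(\omega_{z_i})}\le Ch^k\|u\|_{W^{k+2,\infty}(\Omega)}$. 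Taking the maximum over interior nodes gives \eqref{eq:scheme1_nodal_consistency}.

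\emph{Scheme~2.} Write
\[
(A:H_h\Pih u,v_h)-(f,v_h)=(A:H_h\Pih u-f,v_h).
\]
Cauchy--Schwarz bounds this by $\|A:H_h\Pih u-f\|_{0,\Omega}\|v_h\|_{0,\Omega}$, and Lemma~\ref{lem:recovered_residual_consistency} completes \eqref{eq:scheme2_consistency}.

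\emph{Scheme~3.} Insert and subtract the uninterpolated residual:
\[
(I_h^1(A:H_h\Pih u),w_h)-(f,w_h)=\bigl(I_h^1(A:H_h\Pih u)-A:H_h\Pih u,\,w_h\bigr)+\bigl(A:H_h\Pih u-f,\,w_h\bigr).
\]
Cauchy--Schwarz bounds the first term by $\delta_h^{\rm int}(u)\|w_h\|_{0,\Omega}$ by definition \eqref{eq:delta_int_def}. The second term is bounded by Lemma~\ref{lem:recovered_residual_consistency} with $k=1$, giving $Ch\|u\|_{W^{3,\infty}}\|w_h\|_{0,\Omega}$. The discontinuity of $W_{h,0}^1$ is harmless, since only $L^2$ pairings occur.

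The only delicate point is in Scheme~1: the identity $f(z_i)=A(z_i):D^2u(z_i)$ requires that the nodal values of $A$ and $f$ be well defined, as assumed in Section~\ref{sec:prelim}. For nonsmooth coefficients, this is precisely why the hypothesis of uniformly bounded nodal coefficient values is stated separately. Everything else is routine.
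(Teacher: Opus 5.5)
Your proposal is correct and follows the paper's proof essentially step for step. Scheme~1 uses the identity $f(z_i)=A(z_i):D^2u(z_i)$ together with the uniform nodal coefficient bound and the nodal recovery estimate, Scheme~2 uses Cauchy--Schwarz with Lemma~\ref{lem:recovered_residual_consistency}, and Scheme~3 uses the same add-and-subtract splitting into $r_h(u)$ plus the interpolation defect measured by $\delta_h^{\rm int}(u)$.
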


\begin{proof}
For Scheme~1, since $f(z_i)=A(z_i):D^2u(z_i)$,
\[
    A(z_i):H_h\Pih u(z_i)-f(z_i)
    =
    A(z_i):(H_h\Pih u(z_i)-D^2u(z_i)).
\]
The uniform boundedness of the nodal coefficient values gives
\[
    \left|
    A(z_i):H_h\Pih u(z_i)-f(z_i)
    \right|
    \le
    C\left|H_h\Pih u(z_i)-D^2u(z_i)\right|.
\]
Taking the maximum over $z_i\in\mathcal N_h^I$ and applying the nodal recovery estimate in Lemma~\ref{lem:hess_recovery_estimate} gives \eqref{eq:scheme1_nodal_consistency}.

For Scheme~2, by the definition of $r_h(u)$,
\[
    (A:H_h\Pih u,v_h)-(f,v_h)=(r_h(u),v_h).
\]
The estimate \eqref{eq:scheme2_consistency} follows from Lemma~\ref{lem:recovered_residual_consistency} and the Cauchy--Schwarz inequality.

For Scheme~3, we decompose the residual as
\[
\begin{aligned}
    (I_h^1(A:H_h\Pih u),w_h)-(f,w_h)
    &=(A:H_h\Pih u-f,w_h)  \\
    &\quad +(I_h^1(A:H_h\Pih u)-A:H_h\Pih u,w_h)  \\
    &=(r_h(u),w_h)
    +(I_h^1(A:H_h\Pih u)-A:H_h\Pih u,w_h).
\end{aligned}
\]
Hence, by the Cauchy--Schwarz inequality,
\[
    \left|
    (I_h^1(A:H_h\Pih u),w_h)-(f,w_h)
    \right|
    \le
    \big(
    \|r_h(u)\|_{0,\Omega}+
    \delta_h^{\rm int}(u)
    \big)
    \|w_h\|_{0,\Omega}.
\]
Using Lemma~\ref{lem:recovered_residual_consistency} with $k=1$ gives \eqref{eq:scheme3_consistency}. This completes the proof.
\end{proof}

Together, Lemma~\ref{lem:recovered_residual_consistency} and Proposition~\ref{prop:formulation_consistency} show that the recovered residual is consistent in the natural form associated with each discretization.

\begin{remark}\label{rem:scheme3_delta_int}
The term $\delta_h^{\rm int}(u)$ in the consistency estimate for Scheme~3 measures the interpolation error introduced when the recovered residual is first interpolated and then tested with the biorthogonal basis. If $A:H_h\Pi_hu$ has sufficient elementwise regularity, this term can be bounded by standard interpolation estimates. In the nonsmooth or discontinuous coefficient settings considered in Section~\ref{sec:numerics}, however, such a bound may depend on the chosen nodal representative and on the regularity of the recovered residual across coefficient interfaces. We therefore keep $\delta_h^{\rm int}(u)$ explicit.
\end{remark}

\begin{corollary}[Nodal error estimate in the globally monotone regime]
\label{cor:nodal_error_monotone}
Assume that the hypotheses of Proposition~\ref{prop:formulation_consistency}\emph{(i)} hold for Scheme~1 and that the recovered nodal matrix satisfies the global monotonicity condition of Definition~\ref{def:global_monotone}. Let $u_h\in V_h^k$ be the solution of Scheme~1 with exact Dirichlet nodal values, and let $\Pi_hu=I_h^ku$ be the Lagrange interpolant of the exact solution. Then
\begin{equation}\label{eq:nodal_error_corollary}
    \|(\Pi_hu-u_h)_I\|_{\ell^\infty}
    \le
    Ch^k\|u\|_{W^{k+2,\infty}(\Omega)}.
\end{equation}
Consequently, for fixed polynomial degree $k$,
\begin{equation}\label{eq:function_error_interp}
    \|\Pi_hu-u_h\|_{L^\infty(\Omega)}
    \le
    Ch^k\|u\|_{W^{k+2,\infty}(\Omega)}.
\end{equation}
Combining this estimate with the standard interpolation error gives
\begin{equation}\label{eq:function_error_exact}
    \|u-u_h\|_{L^\infty(\Omega)}
    \le
    Ch^k\|u\|_{W^{k+2,\infty}(\Omega)}.
\end{equation}
\end{corollary}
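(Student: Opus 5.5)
The plan is the standard ``consistency plus stability'' argument, carried out at the level of nodal vectors. Let $e=\Pi_hu-u_h$ with nodal vector $E$. Since $u_h$ uses exact Dirichlet nodal values, $E_B=0$. By \eqref{eq:KU_residual}, Scheme~1 gives $(K_AU)_i=f(z_i)$ for $i\in I$, where $U$ is the nodal vector of $u_h$. Let $\widetilde U$ be the nodal vector of $\Pi_hu$; then $(K_A\widetilde U)_i=A(z_i):H_h\Pi_hu(z_i)$. Subtracting the two relations and using $E_B=0$, the boundary block drops out and we get
\[
    K_{1,h}E_I=\tau,\qquad \tau_i:=A(z_i):H_h\Pi_hu(z_i)-f(z_i),\quad i\in I,
\]
or equivalently $L_hE_I=-\tau$. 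The first step is simply to verify this error equation.

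The second step is the estimate \eqref{eq:nodal_error_corollary}. Proposition~\ref{prop:formulation_consistency}(i) gives $\|\tau\|_{\ell^\infty}\le Ch^k\|u\|_{W^{k+2,\infty}(\Omega)}$. Global monotonicity lets us invoke Theorem~\ref{thm:cond_monotone}, which gives $\|L_h^{-1}\|_\infty\le C$; this inverse bound uses only the barrier argument and needs no uniform-mesh hypothesis. Hence
\[
    \|E_I\|_{\ell^\infty}\le\|L_h^{-1}\|_\infty\|\tau\|_{\ell^\infty}\le Ch^k\|u\|_{W^{k+2,\infty}(\Omega)}.
\]

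The third step passes to function norms. Since $e\in V_h^k$ and vanishes at the boundary nodes, $e=\sum_{i\in I}E_i\phi_i$. On each element, for fixed $k$, the sum of the absolute values of the Lagrange basis functions is bounded by a constant depending only on $k$ (the Lebesgue constant of the reference element, preserved by affine maps). Therefore $\|e\|_{L^\infty}\le C_k\|E_I\|_{\ell^\infty}$, which gives \eqref{eq:function_error_interp}.

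Finally, the triangle inequality with the standard estimate $\|u-\Pi_hu\|_{L^\infty}\le Ch^{k+1}|u|_{W^{k+1,\infty}}\le Ch^k\|u\|_{W^{k+2,\infty}}$ yields \eqref{eq:function_error_exact}.

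The main point to check is that the constant in the inverse bound is genuinely $h$-independent. It must rely only on the barrier $q$, the bound $2\operatorname{tr}A(z_i)\ge 4\lambda_0$ at the nodes, and $\|q_I\|_\infty\le R^2$. Beyond that, the only care needed is that the boundary contribution cancels exactly because $E_B=0$.
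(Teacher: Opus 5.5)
Your proposal is correct and follows the paper's proof essentially step by step: the error equation $K_{1,h}E_I=\tau$ (equivalently $L_hE_I=-\tau$) after the boundary block cancels because $E_B=0$, the $h$-independent inverse bound $\|L_h^{-1}\|_\infty\le C$ from the barrier argument of Theorem~\ref{thm:cond_monotone} combined with the nodal consistency of Proposition~\ref{prop:formulation_consistency}(i), and then $L^\infty$ stability of the Lagrange basis plus the triangle inequality with the interpolation estimate. The only detail worth stating explicitly is that absorbing $Ch^{k+1}$ into $Ch^k$ in the last step uses $h\le 1$ (or bounded mesh sizes), as the paper notes.
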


\begin{proof}
Let $U_h$ denote the nodal vector of $u_h$, and let $U^*$ denote the nodal vector of $\Pi_hu$. Since $u_h$ is imposed with exact Dirichlet nodal values and $\Pi_hu$ interpolates $u$, we have
\[
    (U^*-U_h)_B=0.
\]
Set $e_I=(U^*-U_h)_I$. By the matrix form of Scheme~1,
\[
    K_{1,h}(U_h)_I
    =
    F_{1,I}-(K_A)_{IB}(U_h)_B.
\]
On the other hand, by the definition of the full recovered nodal matrix $K_A$,
\[
    (K_AU^*)_I
    =
    K_{1,h}(U^*)_I+(K_A)_{IB}(U^*)_B.
\]
Since $(U^*)_B=(U_h)_B$, subtracting the discrete equation from the last identity gives
\[
    K_{1,h}e_I=(K_AU^*)_I-F_{1,I}.
\]
For each interior node $z_i$, the $i$th component of the right-hand side is
\[
    \bigl((K_AU^*)_I-F_{1,I}\bigr)_i
    =
    A(z_i):H_h\Pi_hu(z_i)-f(z_i).
\]
Define the interior nodal residual vector $\tau_I$ by
\[
    (\tau_I)_i=A(z_i):H_h\Pi_hu(z_i)-f(z_i),
    \qquad z_i\in\mathcal N_h^I.
\]
Then $K_{1,h}e_I=\tau_I$. Since $L_h=-K_{1,h}$,
\[
    L_he_I=-\tau_I.
\]
By Theorem~\ref{thm:cond_monotone}, the global monotonicity assumption gives the uniform inverse bound $\|L_h^{-1}\|_\infty\le C$. Therefore,
\[
    \|e_I\|_{\ell^\infty}
    \le
    \|L_h^{-1}\|_\infty\|\tau_I\|_{\ell^\infty}
    \le
    C\|\tau_I\|_{\ell^\infty}.
\]
Using the nodal residual consistency estimate in Proposition~\ref{prop:formulation_consistency}\emph{(i)}, we obtain
\[
    \|\tau_I\|_{\ell^\infty}
    \le
    Ch^k\|u\|_{W^{k+2,\infty}(\Omega)}.
\]
This proves \eqref{eq:nodal_error_corollary}.

The function $\Pi_hu-u_h$ belongs to $V_h^k$, has zero boundary nodal values, and has interior nodal vector $e_I$. For fixed polynomial degree $k$, the $L^\infty$ stability of the Lagrange nodal basis on a shape-regular mesh family gives
\[
    \|\Pi_hu-u_h\|_{L^\infty(\Omega)}
    \le C\|e_I\|_{\ell^\infty},
\]
which yields \eqref{eq:function_error_interp}. Finally, the standard interpolation estimate gives
\[
    \|u-\Pi_hu\|_{L^\infty(\Omega)}
    \le
    Ch^{k+1}\|u\|_{W^{k+1,\infty}(\Omega)}.
\]
Since $h\le1$ and $u\in W^{k+2,\infty}(\Omega)$, the triangle inequality gives \eqref{eq:function_error_exact}.
\end{proof}

\begin{remark}
Corollary~\ref{cor:nodal_error_monotone} does not introduce a new solvability assumption. It is a consequence of the globally monotone algebraic regime: Theorem~\ref{thm:unique_global_monotone} gives unique solvability, Theorem~\ref{thm:cond_monotone} gives the uniform inverse bound, and Proposition~\ref{prop:formulation_consistency}\emph{(i)} gives nodal residual consistency. The result is a nodal $L^\infty$ consequence for Scheme~1 and is not intended as a full optimal $L^2$- or $H^1$-error theory. Scheme~2 and Scheme~3 are not covered by this corollary.
\end{remark}

\section{Numerical experiments}\label{sec:numerics}

The numerical study is designed to examine both the accuracy and the algebraic behavior of the Hessian-recovery-based nonvariational discretizations. We consider three increasingly demanding settings: continuous but nonsmooth coefficients, discontinuous coefficients, and a Monge--Amp\`ere type fully nonlinear equation. The first example tests the globally monotone and localized sign-violation regimes identified in Section~\ref{sec:solvability}. The second isolates the effect of coefficient jumps on unstructured meshes while keeping a smooth manufactured solution for rate verification. The third illustrates how the recovered Hessian can be used within a Newton linearization, where each linearized equation has the structure of a variable-coefficient non-divergence form problem.

The tables below are organized to connect the numerical behavior with the preceding analysis. We first report matrix diagnostics, which indicate whether the nodal operator falls into the globally monotone regime or into the localized sign-violation regime. We then report solution and recovered-Hessian errors to assess the accuracy of the corresponding recovered-residual discretizations.

\subsection{Error measures and algebraic diagnostics}\label{subsec:error_diag}

We use the following error measures in the numerical experiments:
\[
    e_0=\|u-u_h\|_{0,\Omega},
    \qquad
    e_1=|u-u_h|_{1,\Omega},
\]
and
\[
    e_H=\|D^2u-H_hu_h\|_{0,\Omega},
    \qquad
    e_\Delta=\|\Delta u-\operatorname{tr}(H_hu_h)\|_{0,\Omega}.
\]
For the $P_2$ tests, we also report the broken $H^2$-type error
\[
    e_2=|u-u_h|_{2,h}
    :=\left(\sum_{T\in\mathcal T_h}\|D^2u-D^2u_h\|_{0,T}^2\right)^{1/2}.
\]
The convergence rate for an error $e$ is computed as
\[
    \operatorname{rate}
    =\log_2\frac{e(h)}{e(h/2)}.
\]

In addition to the error tables, we report algebraic diagnostics for the sign-reversed nodal matrix $L_h$. Unless otherwise stated, the sign-violation diagnostics are computed over the full interior rows, namely for $i\in I$ and $j\in I\cup B$ with $j\ne i$, with the diagonal entries excluded. We denote by $N_{\rm bad}$ the number of interior rows containing at least one positive off-diagonal entry, and by $\rho_{\rm bad}=N_{\rm bad}/n_I$ the corresponding bad-row ratio. We also denote by $N_+$ the total number of positive off-diagonal entries and by $\rho_+$ their ratio among the nonzero off-diagonal entries in these full interior rows. On uniform meshes, we check whether $L_h$ falls into the globally monotone regime of Section~\ref{sec:solvability} by recording the row-sum error, $N_+$, and $N_{\rm bad}$. On unstructured meshes, we further report the Schur-complement diagnostics $\sigma_{\min}(\mathfrak S_h)$ and $\kappa_2(\mathfrak S_h)$, where $\sigma_{\min}$ denotes the smallest singular value and $\kappa_2$ the spectral condition number. These diagnostics are used to examine the algebraic criteria in Section~\ref{sec:solvability}.

The same PPR Hessian recovery construction is used for all schemes on a given mesh. Near the boundary, recovery patches are enlarged when necessary to maintain local unisolvence and stable scaled least-squares fitting. In the Schur-complement diagnostics, the exterior connectivity condition for the good block is checked before forming $\mathfrak S_h$. These Schur-complement quantities are therefore not used merely as independent performance indicators; they are reported as the assembled-matrix counterpart of the localized solvability criterion in Theorem~\ref{thm:schur_unique}. Implementation details and codes will be made available in an accompanying repository.

\subsection{Example 1: nonsmooth coefficients}\label{subsec:ex1_nonsmooth}

We first consider a continuous but nonsmooth coefficient matrix. This example is used to examine both algebraic regimes discussed in Section~\ref{sec:solvability}: the globally monotone regime on uniform meshes and the localized sign-violation regime on unstructured meshes. It also provides the main accuracy test for the three recovered-residual formulations.

We solve \eqref{eq:model} on $\Omega=(-1,1)^2$ with the exact solution
\[
    u(x,y)=\sin x\sin y.
\]
The coefficient matrix is
\begin{equation}\label{eq:ex1_coeff}
    A(x,y)=
    \begin{pmatrix}
        1+|x| & \frac12 |xy|^{1/3}\\[1mm]
        \frac12 |xy|^{1/3} & 1+|y|
    \end{pmatrix}.
\end{equation}
The right-hand side and boundary data are prescribed by
\[
    f=A:D^2u,
    \qquad
    g=u|_{\partial\Omega}.
\]
The coefficient matrix is continuous and uniformly positive definite, but it is not smooth. Thus the example tests the recovered operator in a setting where the coefficient regularity is limited, while the exact solution remains smooth enough to provide clear convergence rates.

Table~\ref{tab:strict_monotone_uniform_ex1} shows that the uniform meshes fall into the clean algebraic regime described in Section~\ref{sec:solvability}. The row-sum defects remain at round-off level, and no positive off-diagonal entries or bad rows are detected. Hence the sign-reversed nodal matrices satisfy the global monotonicity condition. The condition-number estimates increase by approximately a factor of four under uniform refinement, in agreement with the $O(h^{-2})$ bound in Theorem~\ref{thm:cond_monotone}.

\begin{table}[H]
\centering
\footnotesize
\setlength{\tabcolsep}{3pt}
\caption{Uniform-mesh monotonicity diagnostics for Example~1.}
\label{tab:strict_monotone_uniform_ex1}
\begin{tabular}{ccccccc}
\toprule
Scheme & $n_I$ & $N_{\rm bad}$ & $N_+$ & row-sum error
& $\operatorname{condest}$ & strict sign \\
\midrule
1 & 961   & 0 & 0 & $4.6612{\times}10^{-12}$ & $1.2756{\times}10^3$ & yes \\
3 & 961   & 0 & 0 & $1.1657{\times}10^{-14}$ & $1.2756{\times}10^3$ & yes \\
1 & 3969  & 0 & 0 & $1.8645{\times}10^{-11}$ & $5.2290{\times}10^3$ & yes \\
3 & 3969  & 0 & 0 & $1.1713{\times}10^{-14}$ & $5.2290{\times}10^3$ & yes \\
1 & 16129 & 0 & 0 & $7.4579{\times}10^{-11}$ & $2.1108{\times}10^4$ & yes \\
3 & 16129 & 0 & 0 & $1.1990{\times}10^{-14}$ & $2.1108{\times}10^4$ & yes \\
\bottomrule
\end{tabular}
\end{table}

On unstructured meshes, the global off-diagonal sign condition is no longer preserved. Table~\ref{tab:ex1_three_scheme_matrix_diagnostics} shows that Scheme~1 and Scheme~3 develop sign-violating rows, but these violations remain localized and decrease in relative frequency under refinement. After separating the bad rows, the associated Schur complements remain nonsingular and moderately conditioned in all reported cases. The perturbation matrix $E_h=L_{SS}^{-1}L_{SG}L_{GG}^{-1}L_{GS}$ from Remark~\ref{rem:schur_criterion} was also examined; in the reported Scheme~1 and Scheme~3 cases, its spectral radius remained below one, while fixed norm bounds were more conservative. The table therefore reports the direct Schur-complement diagnostics. This behavior is consistent with the localized algebraic solvability mechanism of Theorem~\ref{thm:schur_unique}.

Scheme~2 exhibits a different algebraic pattern. Because the Galerkin-type formulation contains mass-type averaging, many more rows contain positive off-diagonal entries, and the nodal Schur-complement mechanism is not applied to this matrix. At the same time, Scheme~2 has smaller condition-number estimates in this test. Full singular-value checks on the first three unstructured meshes gave positive smallest singular values for the Scheme~2 matrices. These observations suggest favorable computational behavior for the Galerkin-type formulation in the tests considered here, although its algebraic behavior is not explained by the nodal matrix mechanism analyzed in Section~\ref{sec:solvability}.

\begin{table}[H]
\centering
\footnotesize
\setlength{\tabcolsep}{3pt}
\caption{Unstructured-mesh diagnostics for Example~1.}
\label{tab:ex1_three_scheme_matrix_diagnostics}
\begin{tabular}{cccccccccc}
\toprule
$n_I$ & Scheme
& $\operatorname{condest}$
& $N_{\rm bad}$
& $\rho_{\rm bad}$
& $N_+$
& $\rho_+$
& $\sigma_{\min}(\mathfrak S_h)$
& $\kappa_2(\mathfrak S_h)$
& row sum \\
\midrule
433 & 1 & $7.4255{\times}10^2$ & 64 & $14.78\%$ & 81 & $3.12\%$ & $4.0264{\times}10^1$ & $3.4887{\times}10^1$ & $2.1600{\times}10^{-12}$ \\
433 & 2 & $4.7337{\times}10^2$ & 326 & $75.29\%$ & 793 & $11.30\%$ & -- & -- & $1.1546{\times}10^{-14}$ \\
433 & 3 & $6.7028{\times}10^2$ & 64 & $14.78\%$ & 81 & $3.12\%$ & $2.9330{\times}10^{-1}$ & $2.9407{\times}10^1$ & $1.4877{\times}10^{-14}$ \\
\midrule
1809 & 1 & $3.2768{\times}10^3$ & 185 & $10.23\%$ & 235 & $2.17\%$ & $5.9974{\times}10^1$ & $1.0632{\times}10^2$ & $9.3223{\times}10^{-12}$ \\
1809 & 2 & $1.6053{\times}10^3$ & 1144 & $63.24\%$ & 2528 & $7.80\%$ & -- & -- & $1.1838{\times}10^{-14}$ \\
1809 & 3 & $2.8350{\times}10^3$ & 185 & $10.23\%$ & 235 & $2.17\%$ & $1.1240{\times}10^{-1}$ & $8.6779{\times}10^1$ & $1.5987{\times}10^{-14}$ \\
\midrule
7393 & 1 & $1.3061{\times}10^4$ & 428 & $5.79\%$ & 544 & $1.23\%$ & $1.0495{\times}10^2$ & $2.5950{\times}10^2$ & $4.5475{\times}10^{-11}$ \\
7393 & 2 & $6.2784{\times}10^3$ & 4031 & $54.52\%$ & 8399 & $6.33\%$ & -- & -- & $1.2768{\times}10^{-14}$ \\
7393 & 3 & $1.1421{\times}10^4$ & 428 & $5.79\%$ & 544 & $1.23\%$ & $4.9800{\times}10^{-2}$ & $2.0973{\times}10^2$ & $1.6653{\times}10^{-14}$ \\
\midrule
29889 & 1 & $5.3342{\times}10^4$ & 916 & $3.06\%$ & 1161 & $0.65\%$ & $1.9710{\times}10^2$ & $5.6480{\times}10^2$ & $2.1646{\times}10^{-10}$ \\
29889 & 2 & $2.4842{\times}10^4$ & 14417 & $48.24\%$ & 29775 & $5.54\%$ & -- & -- & $1.3545{\times}10^{-14}$ \\
29889 & 3 & $4.6012{\times}10^4$ & 916 & $3.06\%$ & 1161 & $0.65\%$ & $2.3500{\times}10^{-2}$ & $4.5418{\times}10^2$ & $2.0206{\times}10^{-14}$ \\
\bottomrule
\end{tabular}
\end{table}

To further examine the transition from the globally monotone regime to the localized sign-violation regime, we include a controlled mesh-perturbation test. Starting from the uniform mesh used in Example~1, each interior vertex $z_i$ is moved to
\[
    z_i^\delta=z_i+\delta h\xi_i,
\]
where $\delta\ge0$ is a prescribed nondimensional perturbation amplitude, $h$ is the uniform mesh size, $\xi_i\in[-1,1]^2$ is generated with a fixed random seed, and all boundary vertices are kept fixed. Perturbations that produce inverted elements are rejected.

Table~\ref{tab:mesh_perturb_ex1} reports the Scheme~1 diagnostics. Since this auxiliary test is designed to track the loss of the nodal sign structure under mesh perturbations, we report only Scheme~1. Scheme~3 is not listed separately because $K_{3,h}=D_\mu K_{1,h}$, so it has the same sign-violation pattern and inherits the same nonsingularity conclusion through positive diagonal row scaling. The unperturbed case $\delta=0$ serves as the monotone baseline. As $\delta$ increases, positive off-diagonal entries appear gradually, indicating that the global monotone sign structure is sensitive to mesh perturbations. Nevertheless, for the perturbation levels considered here, the sign-violating rows remain localized, the row-sum defects stay at roundoff level, and the associated Schur complements remain nonsingular with moderate condition numbers. This controlled test illustrates how the Schur-complement diagnostic provides an assembled-matrix solvability certificate after a mild loss of global monotonicity.

\begin{table}[H]
\centering
\footnotesize
\setlength{\tabcolsep}{3pt}
\caption{Controlled mesh-perturbation diagnostics for Example~1. Here $n_I=3969$ for all perturbation levels.}
\label{tab:mesh_perturb_ex1}
\begin{tabular}{cccccc}
\toprule
$\delta$ & $\operatorname{condest}$ & $N_{\rm bad}\, (\rho_{\rm bad})$ & $N_+\, (\rho_+)$ & $\sigma_{\min}(\mathfrak S_h)$ & $\kappa_2(\mathfrak S_h)$ \\
\midrule
0.00 & $5.2290{\times}10^3$ & 0 (0.00\%) & 0 (0.00\%) & -- & -- \\
0.01 & $5.2157{\times}10^3$ & 65 (1.64\%) & 65 (0.27\%) & $1.0850{\times}10^2$ & $7.3515{\times}10^1$ \\
0.02 & $5.2947{\times}10^3$ & 103 (2.60\%) & 104 (0.44\%) & $6.5546{\times}10^1$ & $1.2740{\times}10^2$ \\
0.03 & $5.5008{\times}10^3$ & 218 (5.49\%) & 219 (0.92\%) & $3.6363{\times}10^1$ & $2.5418{\times}10^2$ \\
0.04 & $5.7061{\times}10^3$ & 382 (9.62\%) & 383 (1.61\%) & $2.5170{\times}10^1$ & $3.6863{\times}10^2$ \\
0.05 & $5.9102{\times}10^3$ & 648 (16.33\%) & 649 (2.73\%) & $1.8122{\times}10^1$ & $5.1839{\times}10^2$ \\
\bottomrule
\end{tabular}
\end{table}

Table~\ref{tab:ex1_p1_errors_three_schemes} reports the $P_1$ errors on unstructured meshes. All three schemes achieve the expected rates for the finite element solution, with approximately second-order convergence in $e_0$ and first-order convergence in $e_1$. Scheme~1 and Scheme~2 give nearly identical solution errors. Scheme~3 preserves the same $L^2$- and $H^1$-error rates, although its recovered Hessian and recovered Laplacian errors are larger in this test. For Scheme~1 and Scheme~2, the recovered Laplacian error $e_\Delta$ is close to second order, while the recovered Hessian error $e_H$ converges with an order around $1.5$.

\begin{table}[H]
\centering
\footnotesize
\setlength{\tabcolsep}{3pt}
\caption{$P_1$ errors on unstructured meshes for Example~1.}
\label{tab:ex1_p1_errors_three_schemes}
\begin{tabular}{cccccccccc}
\toprule
\multirow{2}{*}{Scheme} & \multirow{2}{*}{$n_I$}
& \multicolumn{2}{c}{$e_0$}
& \multicolumn{2}{c}{$e_1$}
& \multicolumn{2}{c}{$e_H$}
& \multicolumn{2}{c}{$e_\Delta$} \\
\cmidrule(lr){3-4}\cmidrule(lr){5-6}\cmidrule(lr){7-8}\cmidrule(lr){9-10}
& & error & order & error & order & error & order & error & order \\
\midrule
1 & 105 & $3.80{\times}10^{-3}$ & -- & $1.23{\times}10^{-1}$ & -- & $8.28{\times}10^{-2}$ & -- & $6.47{\times}10^{-2}$ & -- \\
1 & 433 & $9.59{\times}10^{-4}$ & 1.99 & $6.15{\times}10^{-2}$ & 1.00 & $2.70{\times}10^{-2}$ & 1.62 & $1.44{\times}10^{-2}$ & 2.17 \\
1 & 1809 & $2.40{\times}10^{-4}$ & 2.00 & $3.08{\times}10^{-2}$ & 1.00 & $9.30{\times}10^{-3}$ & 1.54 & $3.60{\times}10^{-3}$ & 2.00 \\
1 & 7393 & $6.00{\times}10^{-5}$ & 2.00 & $1.54{\times}10^{-2}$ & 1.00 & $3.30{\times}10^{-3}$ & 1.49 & $9.30{\times}10^{-4}$ & 1.95 \\
\midrule
2 & 105 & $3.70{\times}10^{-3}$ & -- & $1.23{\times}10^{-1}$ & -- & $7.74{\times}10^{-2}$ & -- & $5.06{\times}10^{-2}$ & -- \\
2 & 433 & $9.42{\times}10^{-4}$ & 1.97 & $6.15{\times}10^{-2}$ & 1.00 & $2.61{\times}10^{-2}$ & 1.57 & $1.10{\times}10^{-2}$ & 2.20 \\
2 & 1809 & $2.39{\times}10^{-4}$ & 1.98 & $3.08{\times}10^{-2}$ & 1.00 & $9.20{\times}10^{-3}$ & 1.50 & $2.90{\times}10^{-3}$ & 1.92 \\
2 & 7393 & $5.99{\times}10^{-5}$ & 1.99 & $1.54{\times}10^{-2}$ & 1.00 & $3.30{\times}10^{-3}$ & 1.48 & $8.02{\times}10^{-4}$ & 1.85 \\
\midrule
3 & 105 & $4.70{\times}10^{-3}$ & -- & $1.23{\times}10^{-1}$ & -- & $8.57{\times}10^{-2}$ & -- & $6.91{\times}10^{-2}$ & -- \\
3 & 433 & $1.30{\times}10^{-3}$ & 1.85 & $6.15{\times}10^{-2}$ & 1.00 & $2.89{\times}10^{-2}$ & 1.57 & $1.84{\times}10^{-2}$ & 1.91 \\
3 & 1809 & $2.83{\times}10^{-4}$ & 2.20 & $3.08{\times}10^{-2}$ & 1.00 & $1.09{\times}10^{-2}$ & 1.41 & $7.10{\times}10^{-3}$ & 1.37 \\
3 & 7393 & $6.43{\times}10^{-5}$ & 2.14 & $1.54{\times}10^{-2}$ & 1.00 & $4.50{\times}10^{-3}$ & 1.28 & $3.50{\times}10^{-3}$ & 1.02 \\
\bottomrule
\end{tabular}
\end{table}

Table~\ref{tab:ex1_p2_errors} reports the $P_2$ results for Scheme~1 and Scheme~2. The results show that the recovered-residual construction extends naturally to higher-order Lagrange spaces. Both schemes achieve the expected finite element convergence rates: approximately third order in $e_0$, second order in $e_1$, and first order in the broken $H^2$-type error $e_2$. The recovered Hessian and Laplacian errors also decrease under refinement. Scheme~2 gives smaller recovery-related errors on several meshes, while the solution errors of the two schemes become essentially indistinguishable on finer meshes.

\begin{table}[H]
\centering
\footnotesize
\setlength{\tabcolsep}{3pt}
\caption{$P_2$ errors on unstructured meshes for Example~1.}
\label{tab:ex1_p2_errors}
\begin{tabular}{cccccccccccc}
\toprule
\multirow{2}{*}{Scheme} & \multirow{2}{*}{$n_I$}
& \multicolumn{2}{c}{$e_0$}
& \multicolumn{2}{c}{$e_1$}
& \multicolumn{2}{c}{$e_2$}
& \multicolumn{2}{c}{$e_H$}
& \multicolumn{2}{c}{$e_\Delta$} \\
\cmidrule(lr){3-4}\cmidrule(lr){5-6}\cmidrule(lr){7-8}\cmidrule(lr){9-10}\cmidrule(lr){11-12}
& & error & order & error & order & error & order & error & order & error & order \\
\midrule
1 & 433 & $8.50{\times}10^{-5}$ & -- & $3.29{\times}10^{-3}$ & -- & $1.25{\times}10^{-1}$ & -- & $7.26{\times}10^{-3}$ & -- & $4.29{\times}10^{-3}$ & -- \\
1 & 1809 & $1.09{\times}10^{-5}$ & 2.96 & $7.99{\times}10^{-4}$ & 2.04 & $6.20{\times}10^{-2}$ & 1.01 & $3.01{\times}10^{-3}$ & 1.27 & $1.42{\times}10^{-3}$ & 1.60 \\
1 & 7393 & $1.08{\times}10^{-6}$ & 3.34 & $1.72{\times}10^{-4}$ & 2.21 & $2.97{\times}10^{-2}$ & 1.06 & $5.99{\times}10^{-5}$ & 5.65 & $2.72{\times}10^{-5}$ & 5.70 \\
1 & 29889 & $1.35{\times}10^{-7}$ & 3.00 & $4.31{\times}10^{-5}$ & 2.00 & $1.48{\times}10^{-2}$ & 1.00 & $2.34{\times}10^{-5}$ & 1.35 & $8.51{\times}10^{-6}$ & 1.68 \\
1 & 120193 & $1.69{\times}10^{-8}$ & 3.00 & $1.08{\times}10^{-5}$ & 2.00 & $7.42{\times}10^{-3}$ & 1.00 & $2.81{\times}10^{-7}$ & 6.38 & $9.38{\times}10^{-8}$ & 6.50 \\
\midrule
2 & 433 & $7.23{\times}10^{-5}$ & -- & $2.83{\times}10^{-3}$ & -- & $1.19{\times}10^{-1}$ & -- & $2.70{\times}10^{-3}$ & -- & $1.29{\times}10^{-3}$ & -- \\
2 & 1809 & $8.65{\times}10^{-6}$ & 3.06 & $6.89{\times}10^{-4}$ & 2.04 & $5.93{\times}10^{-2}$ & 1.01 & $1.28{\times}10^{-4}$ & 4.40 & $4.68{\times}10^{-5}$ & 4.78 \\
2 & 7393 & $1.08{\times}10^{-6}$ & 3.00 & $1.72{\times}10^{-4}$ & 2.00 & $2.97{\times}10^{-2}$ & 1.00 & $2.37{\times}10^{-5}$ & 2.43 & $9.35{\times}10^{-6}$ & 2.32 \\
2 & 29889 & $1.35{\times}10^{-7}$ & 3.00 & $4.31{\times}10^{-5}$ & 2.00 & $1.48{\times}10^{-2}$ & 1.00 & $6.10{\times}10^{-6}$ & 1.96 & $1.89{\times}10^{-6}$ & 2.31 \\
2 & 120193 & $1.69{\times}10^{-8}$ & 3.00 & $1.08{\times}10^{-5}$ & 2.00 & $7.42{\times}10^{-3}$ & 1.00 & $1.12{\times}10^{-6}$ & 2.44 & $3.12{\times}10^{-7}$ & 2.60 \\
\bottomrule
\end{tabular}
\end{table}

The comparison in Example~1 highlights the separation between matrix structure and computational behavior. Scheme~1 is the formulation directly tied to the nodal algebraic criteria of Section~\ref{sec:solvability} and gives competitive accuracy. Scheme~3 inherits the same nonsingularity mechanism through the positive diagonal row scaling, but its recovered Hessian and Laplacian errors are less favorable in this test. Scheme~2 has a different matrix structure because of Galerkin-type averaging; nevertheless, it gives the smallest condition-number estimates and solution errors comparable to those of Scheme~1. This makes Scheme~2 a useful computational counterpart to the nodal formulation, while also illustrating why its analysis requires tools different from the nodal sign-structure arguments.

\subsection{Example 2: discontinuous coefficients}\label{subsec:ex2_disc}

We next consider a discontinuous coefficient matrix. The purpose of this manufactured-solution test is to isolate the effect of coefficient jumps on the recovered non-divergence operator and on the associated algebraic diagnostics, while keeping the exact solution smooth enough for a clear convergence-rate study.

We take
\[
    A(x,y)=
    \begin{pmatrix}
        2 & s(x,y)\\
        s(x,y) & 2
    \end{pmatrix},
    \qquad
    s(x,y)=\operatorname{sgn}(xy),
\]
on $\Omega=(-1,1)^2$. The convention $s(x,y)=0$ on the coordinate axes is used to fix the nodal representative on the coefficient jump set. The exact solution is
\[
    u(x,y)=\sin x\sin y.
\]
The right-hand side and boundary data are prescribed by
\[
    f=A:D^2u,
    \qquad
    g=u|_{\partial\Omega}.
\]

The coefficient jump leads to a more challenging algebraic pattern on unstructured meshes. As shown in Table~\ref{tab:ex2_p1_matrix_diagnostics}, Scheme~1 no longer falls into the globally monotone regime, and the bad-row ratio is substantially larger than in Example~1. Nevertheless, the sign-violating rows remain localized in the sense that their relative frequency decreases under refinement, and the associated Schur complements remain nonsingular in all reported cases. The spectral radius of the perturbation matrix in Remark~\ref{rem:schur_criterion} remained below one in the computed cases, although norm-based bounds were conservative. This supports the use of the localized Schur-complement diagnostic for the nodal formulation in the tested cases.

Scheme~2 again displays a different algebraic structure. The mass-type averaging produces positive off-diagonal entries in almost all rows, so the nodal Schur-complement diagnostic is not applied to this matrix. Its condition-number estimates, however, remain smaller than those of Scheme~1 in this test. Full singular-value checks on the first three meshes gave positive smallest singular values for the Scheme~2 matrices, consistent with the computational behavior observed in the tests of Example~1.

\begin{table}[H]
\centering
\footnotesize
\setlength{\tabcolsep}{3pt}
\caption{Unstructured-mesh diagnostics for Example~2.}
\label{tab:ex2_p1_matrix_diagnostics}
\begin{tabular}{cccccccc}
\toprule
$n_I$ & Scheme
& $\operatorname{condest}$
& $N_{\rm bad}$
& $\rho_{\rm bad}$
& $\rho_+$
& $\operatorname{condest}(\mathfrak S_h)$
& row sum \\
\midrule
433 & 1 & $1.3597{\times}10^{3}$ & 177 & $40.88\%$ & $8.05\%$ & $2.7408{\times}10^{2}$ & $2.7498{\times}10^{-12}$ \\
433 & 2 & $7.6234{\times}10^{2}$ & 422 & $97.46\%$ & $14.92\%$ & -- & $1.4211{\times}10^{-14}$ \\
\midrule
1809 & 1 & $8.6528{\times}10^{3}$ & 507 & $28.03\%$ & $6.09\%$ & $9.5773{\times}10^{2}$ & $1.2392{\times}10^{-11}$ \\
1809 & 2 & $4.3941{\times}10^{3}$ & 1786 & $98.73\%$ & $13.88\%$ & -- & $1.4710{\times}10^{-14}$ \\
\midrule
7393 & 1 & $4.8994{\times}10^{4}$ & 1431 & $19.36\%$ & $4.72\%$ & $2.7248{\times}10^{3}$ & $5.3433{\times}10^{-11}$ \\
7393 & 2 & $2.0510{\times}10^{4}$ & 7346 & $99.36\%$ & $13.08\%$ & -- & $1.7042{\times}10^{-14}$ \\
\midrule
29889 & 1 & $2.8405{\times}10^{5}$ & 4367 & $14.61\%$ & $3.97\%$ & $8.4087{\times}10^{3}$ & $2.1282{\times}10^{-10}$ \\
29889 & 2 & $1.1326{\times}10^{5}$ & 29779 & $99.63\%$ & $12.68\%$ & -- & $1.7431{\times}10^{-14}$ \\
\bottomrule
\end{tabular}
\end{table}

Table~\ref{tab:ex2_p2_errors} shows that the discontinuous-coefficient test retains the expected accuracy on the unstructured meshes considered here. Both Scheme~1 and Scheme~2 achieve the expected finite element convergence rates, with approximately third-order convergence in $e_0$, second-order convergence in $e_1$, and first-order convergence in the broken $H^2$-type error $e_2$. Scheme~2 gives smaller error constants for most quantities, while the two schemes exhibit comparable asymptotic rates. These results indicate that the recovered-residual discretizations can retain high-order accuracy in this manufactured discontinuous-coefficient setting, even though the algebraic sign structure is substantially more complicated than in Example~1.

\begin{table}[H]
\centering
\footnotesize
\setlength{\tabcolsep}{3pt}
\caption{$P_2$ errors on unstructured meshes for Example~2.}
\label{tab:ex2_p2_errors}
\begin{tabular}{cccccccccccc}
\toprule
\multirow{2}{*}{Scheme} & \multirow{2}{*}{$n_I$}
& \multicolumn{2}{c}{$e_0$}
& \multicolumn{2}{c}{$e_1$}
& \multicolumn{2}{c}{$e_2$}
& \multicolumn{2}{c}{$e_H$}
& \multicolumn{2}{c}{$e_\Delta$} \\
\cmidrule(lr){3-4}\cmidrule(lr){5-6}\cmidrule(lr){7-8}\cmidrule(lr){9-10}\cmidrule(lr){11-12}
& & error & order & error & order & error & order & error & order & error & order \\
\midrule
1 & 433 & $8.82{\times}10^{-5}$ & -- & $3.30{\times}10^{-3}$ & -- & $1.27{\times}10^{-1}$ & -- & $9.20{\times}10^{-3}$ & -- & $7.14{\times}10^{-3}$ & -- \\
1 & 1809 & $9.24{\times}10^{-6}$ & 3.25 & $7.01{\times}10^{-4}$ & 2.24 & $5.96{\times}10^{-2}$ & 1.09 & $1.09{\times}10^{-3}$ & 3.08 & $6.53{\times}10^{-4}$ & 3.45 \\
1 & 7393 & $1.23{\times}10^{-6}$ & 2.91 & $1.86{\times}10^{-4}$ & 1.92 & $3.03{\times}10^{-2}$ & 0.98 & $1.28{\times}10^{-3}$ & -0.24 & $7.65{\times}10^{-4}$ & -0.23 \\
1 & 29889 & $1.37{\times}10^{-7}$ & 3.16 & $4.33{\times}10^{-5}$ & 2.10 & $1.49{\times}10^{-2}$ & 1.03 & $1.58{\times}10^{-4}$ & 3.02 & $8.61{\times}10^{-5}$ & 3.15 \\
1 & 120193 & $1.69{\times}10^{-8}$ & 3.02 & $1.08{\times}10^{-5}$ & 2.01 & $7.42{\times}10^{-3}$ & 1.00 & $1.37{\times}10^{-5}$ & 3.53 & $7.34{\times}10^{-6}$ & 3.55 \\
\midrule
2 & 433 & $7.14{\times}10^{-5}$ & -- & $2.83{\times}10^{-3}$ & -- & $1.19{\times}10^{-1}$ & -- & $2.69{\times}10^{-3}$ & -- & $1.48{\times}10^{-3}$ & -- \\
2 & 1809 & $8.69{\times}10^{-6}$ & 3.04 & $6.92{\times}10^{-4}$ & 2.03 & $5.94{\times}10^{-2}$ & 1.01 & $4.43{\times}10^{-4}$ & 2.60 & $2.59{\times}10^{-4}$ & 2.51 \\
2 & 7393 & $1.08{\times}10^{-6}$ & 3.00 & $1.73{\times}10^{-4}$ & 2.00 & $2.97{\times}10^{-2}$ & 1.00 & $1.21{\times}10^{-4}$ & 1.87 & $6.67{\times}10^{-5}$ & 1.96 \\
2 & 29889 & $1.35{\times}10^{-7}$ & 3.00 & $4.31{\times}10^{-5}$ & 2.00 & $1.48{\times}10^{-2}$ & 1.00 & $6.66{\times}10^{-5}$ & 0.86 & $3.53{\times}10^{-5}$ & 0.92 \\
2 & 120193 & $1.69{\times}10^{-8}$ & 3.00 & $1.08{\times}10^{-5}$ & 2.00 & $7.42{\times}10^{-3}$ & 1.00 & $1.06{\times}10^{-5}$ & 2.65 & $5.40{\times}10^{-6}$ & 2.71 \\
\bottomrule
\end{tabular}
\end{table}

\paragraph{A supplementary low-regularity test.}
We further consider a supplementary test based on the discontinuous coefficient matrix in Example~2, but with a less regular manufactured solution,
\[
    u(x,y)=|x|^{2+\alpha}\sin(\pi y),\qquad \alpha=\frac14 .
\]
This test complements the preceding smooth-solution experiments by reducing the regularity of the exact solution while keeping the same discontinuous-coefficient setting. Since the Hessian of $u$ has only limited H\"older regularity across $x=0$, the optimal recovery behavior observed for smooth solutions is not expected.

Table~\ref{tab:ex2_lowreg_p1_errors} reports the $P_1$ errors for Scheme~1 and Scheme~2 on unstructured meshes. Both formulations show consistent convergence behavior. The solution errors retain the expected $P_1$ behavior, with approximately second-order convergence in $e_0$ and first-order convergence in $e_1$. The recovered Hessian and recovered Laplacian errors also decrease steadily, but with reduced rates compared with the smooth-solution tests, in agreement with the limited regularity of the exact solution and the regularity requirements of Hessian recovery. In this supplementary test, Scheme~1 gives slightly smaller solution errors, while Scheme~2 gives slightly smaller recovered-derivative errors; overall, the two formulations exhibit comparable convergence behavior.

\begin{table}[H]
\centering
\footnotesize
\setlength{\tabcolsep}{3pt}
\caption{$P_1$ errors for the supplementary low-regularity test on unstructured meshes.}
\label{tab:ex2_lowreg_p1_errors}
\begin{tabular}{cccccccccc}
\toprule
\multirow{2}{*}{Scheme} & \multirow{2}{*}{$n_I$}
& \multicolumn{2}{c}{$e_0$}
& \multicolumn{2}{c}{$e_1$}
& \multicolumn{2}{c}{$e_H$}
& \multicolumn{2}{c}{$e_\Delta$} \\
\cmidrule(lr){3-4}\cmidrule(lr){5-6}\cmidrule(lr){7-8}\cmidrule(lr){9-10}
& & error & order & error & order & error & order & error & order \\
\midrule
1 & 139  & $1.88{\times}10^{-2}$ & -- & $5.714{\times}10^{-1}$ & -- & $2.2118$ & -- & $1.0669$ & -- \\
1 & 513  & $4.50{\times}10^{-3}$ & 2.06 & $2.872{\times}10^{-1}$ & 0.99 & $8.560{\times}10^{-1}$ & 1.37 & $4.680{\times}10^{-1}$ & 1.19 \\
1 & 1969 & $1.10{\times}10^{-3}$ & 2.03 & $1.437{\times}10^{-1}$ & 1.00 & $3.070{\times}10^{-1}$ & 1.48 & $1.784{\times}10^{-1}$ & 1.39 \\
1 & 7713 & $2.859{\times}10^{-4}$ & 1.94 & $7.18{\times}10^{-2}$ & 1.00 & $1.102{\times}10^{-1}$ & 1.48 & $6.72{\times}10^{-2}$ & 1.41 \\
\midrule
2 & 139  & $2.26{\times}10^{-2}$ & -- & $5.757{\times}10^{-1}$ & -- & $2.1030$ & -- & $9.882{\times}10^{-1}$ & -- \\
2 & 513  & $5.30{\times}10^{-3}$ & 2.09 & $2.877{\times}10^{-1}$ & 1.00 & $7.981{\times}10^{-1}$ & 1.40 & $4.101{\times}10^{-1}$ & 1.27 \\
2 & 1969 & $1.30{\times}10^{-3}$ & 2.03 & $1.437{\times}10^{-1}$ & 1.00 & $2.828{\times}10^{-1}$ & 1.50 & $1.523{\times}10^{-1}$ & 1.43 \\
2 & 7713 & $3.104{\times}10^{-4}$ & 2.07 & $7.18{\times}10^{-2}$ & 1.00 & $1.009{\times}10^{-1}$ & 1.49 & $5.71{\times}10^{-2}$ & 1.42 \\
\bottomrule
\end{tabular}
\end{table}

\subsection{Example 3: a fully nonlinear Monge--Amp\`ere type problem}\label{subsec:ma}

We finally consider a Monge--Amp\`ere type problem to illustrate the use of the recovered Hessian within a nonlinear iteration. The key feature of this example is that the coefficient matrix in each Newton step is not prescribed in advance. Instead, it is generated from the current recovered Hessian and changes during the iteration. Thus the computation tests the recovered-residual discretization on a sequence of dynamically generated variable-coefficient non-divergence form problems.

Let
\[
    \det D^2u=f \qquad \text{in }\Omega,
    \qquad
    u=g \qquad \text{on }\partial\Omega .
\]
Given $u^n$, the Newton linearization reads
\[
    \operatorname{cof}(D^2u^n):D^2u^{n+1}
    = f+\det(D^2u^n)
    \qquad \text{in }\Omega,
\]
with $u^{n+1}=g$ on $\partial\Omega$. Here $\operatorname{cof}(D^2u^n)$ denotes the cofactor matrix of the Hessian. Hence each Newton step has the form
\[
    A^n:D^2u^{n+1}=F^n,
    \qquad
    A^n=\operatorname{cof}(D^2u^n),
    \qquad
    F^n=f+\det(D^2u^n),
\]
which is a variable-coefficient equation in non-divergence form.

The initial guess is chosen as the solution of the Poisson problem
\[
    \Delta u^0=2\sqrt f
    \qquad \text{in }\Omega,
    \qquad
    u^0=g
    \qquad \text{on }\partial\Omega .
\]

We discretize this Newton sequence by replacing the Hessian with the recovered Hessian.

\paragraph{Recovered-Hessian Newton iteration.}
In the discrete Newton iteration, both the coefficient matrix and the right-hand side are updated from the recovered Hessian. Let $u_h^0\in V_{h,g}^k$ be the finite element approximation of the initial guess. Given $u_h^n\in V_{h,g}^k$, find $u_h^{n+1}\in V_{h,g}^k$ such that
\begin{equation}\label{eq:ma_newton_discrete}
\operatorname{cof}\!\left(H_hu_h^n(z_i)\right)
:
H_hu_h^{n+1}(z_i)
=
 f(z_i)+\det\!\left(H_hu_h^n(z_i)\right),
\qquad z_i\in\mathcal N_h^I .
\end{equation}
The boundary nodal values of $u_h^{n+1}$ are prescribed by $g$. Equivalently, each nonlinear step solves a recovered non-divergence equation with coefficient matrix
\[
    A_h^n(z_i)=\operatorname{cof}\!\left(H_hu_h^n(z_i)\right)
\]
and right-hand side
\[
    F_h^n(z_i)=f(z_i)+\det\!\left(H_hu_h^n(z_i)\right).
\]
Thus the same recovered nodal operator used in the linear tests is applied here to a sequence of Newton coefficients generated dynamically by the nonlinear iteration.

We use the following manufactured solution. Let
\[
    \Omega=(-1,1)^2,
    \qquad
    u(x,y)=\frac12(x^2+y^2)+\varepsilon\sin(\pi x)\sin(\pi y),
    \qquad \varepsilon=0.02.
\]
The boundary data and right-hand side are prescribed by
\[
    g=u|_{\partial\Omega},
    \qquad
    f=\det D^2u.
\]
The parameter $\varepsilon$ is chosen so that the exact solution is strictly convex while still containing a nontrivial mixed derivative.

Table~\ref{tab:ma_nonuniform_diagnostics} reports the worst algebraic diagnostics over the entire Newton history on each mesh. Since
\[
    A_h^n=\operatorname{cof}(H_hu_h^n)
\]
is updated at every Newton step, the sign pattern, the bad-row set, the Schur complement, and the conditioning of the nodal matrix are not fixed a priori. The diagnostics therefore examine whether the localized Schur-complement mechanism remains informative for these dynamically generated linearized problems. In all reported cases, the recovered Hessian remains positive definite, every linearized matrix satisfies the Schur-complement nonsingularity diagnostic, and the bad-row and positive-off-diagonal ratios decrease under refinement. The condition-number estimates again exhibit $O(h^{-2})$-type growth. These results provide consistent numerical evidence that the localized algebraic diagnostics remain informative for the Newton systems arising in this test.

\begin{table}[H]
\centering
\footnotesize
\setlength{\tabcolsep}{3pt}
\caption{Worst-case Newton diagnostics on unstructured meshes for Example~3.}
\label{tab:ma_nonuniform_diagnostics}
\begin{tabular}{cccccccc}
\toprule
$n_I$ & iter.
& $\min \lambda_{\min}(H_hu_h^n)$
& $\operatorname{condest}$
& $\rho_{\rm bad}$
& $\rho_+$
& $\operatorname{condest}(\mathfrak S_h)$
& Schur \\
\midrule
433   & 4 & $6.3425{\times}10^{-1}$ & $5.3984{\times}10^2$ & $12.01\%$ & $2.46\%$ & $3.1284{\times}10^1$ & yes \\
1809  & 4 & $6.3725{\times}10^{-1}$ & $2.5270{\times}10^3$ & $9.29\%$  & $1.80\%$ & $1.2423{\times}10^2$ & yes \\
7393  & 3 & $6.3822{\times}10^{-1}$ & $1.0275{\times}10^4$ & $5.48\%$  & $1.04\%$ & $3.0584{\times}10^2$ & yes \\
29889 & 3 & $6.3868{\times}10^{-1}$ & $4.1435{\times}10^4$ & $2.95\%$  & $0.56\%$ & $6.6360{\times}10^2$ & yes \\
\bottomrule
\end{tabular}
\end{table}

Table~\ref{tab:ma_nonuniform_errors} reports the solution errors and Newton iteration counts. The number of Newton steps remains essentially unchanged under refinement, with only three or four iterations required on the reported meshes. The solution errors follow the same pattern as in the linear $P_1$ tests, with approximately second-order convergence in $e_0$ and first-order convergence in $e_1$, while the recovered Hessian error decreases steadily. Together with the diagnostics in Table~\ref{tab:ma_nonuniform_diagnostics}, these results indicate that the recovered-Hessian construction can be incorporated into a Newton linearization for this Monge--Amp\`ere type test.

\begin{table}[H]
\centering
\footnotesize
\setlength{\tabcolsep}{3pt}
\caption{Errors and Newton iterations on unstructured meshes for Example~3.}
\label{tab:ma_nonuniform_errors}
\begin{tabular}{cccccccc}
\toprule
\multirow{2}{*}{$n_I$}
& \multicolumn{2}{c}{$e_0$}
& \multicolumn{2}{c}{$e_1$}
& \multicolumn{2}{c}{$e_H$}
& \multirow{2}{*}{iter.} \\
\cmidrule(lr){2-3}\cmidrule(lr){4-5}\cmidrule(lr){6-7}
& error & order & error & order & error & order & \\
\midrule
433 & $1.30{\times}10^{-3}$ & -- & $2.72{\times}10^{-2}$ & -- & $2.75{\times}10^{-2}$ & -- & 4 \\
1809 & $2.46{\times}10^{-4}$ & 2.40 & $1.36{\times}10^{-2}$ & 1.00 & $1.25{\times}10^{-2}$ & 1.14 & 4 \\
7393 & $8.10{\times}10^{-5}$ & 1.60 & $6.80{\times}10^{-3}$ & 1.00 & $5.90{\times}10^{-3}$ & 1.08 & 4 \\
29889 & $2.02{\times}10^{-5}$ & 2.00 & $3.40{\times}10^{-3}$ & 1.00 & $2.90{\times}10^{-3}$ & 1.02 & 3 \\
120193 & $5.07{\times}10^{-6}$ & 2.00 & $1.70{\times}10^{-3}$ & 1.00 & $1.40{\times}10^{-3}$ & 1.05 & 3 \\
\bottomrule
\end{tabular}
\end{table}

\section{Conclusion}\label{sec:conclusion}

This paper presents a Hessian-recovery-based \(C^0\) finite element framework for elliptic equations in non-divergence form. The method replaces the strong Hessian \(D^2u\) by the recovered Hessian \(H_hu_h\) and builds the recovered residual \(A:H_hu_h-f\). This residual leads to three complementary discretizations: a nodal recovered-residual scheme, a Galerkin-type recovered-residual formulation, and a biorthogonal Petrov--Galerkin realization of the nodal formulation for linear elements.

A central issue for this class of \(C^0\) nonvariational discretizations is that the usual variational stability arguments do not apply directly. We therefore studied the recovered nodal operator through its matrix structure. The row-sum identity inherited from Hessian recovery leads to two verifiable algebraic solvability mechanisms: a globally monotone regime based on a discrete maximum principle and a Schur-complement criterion for localized sign violations. We also obtained a uniform inverse bound, an \(O(h^{-2})\) conditioning estimate in the monotone case, and residual consistency estimates based on the Hessian recovery error. Combined with the uniform inverse bound, the nodal consistency estimate gives a nodal \(L^\infty\)-error estimate for Scheme~1 in the globally monotone regime.

The numerical experiments provide consistent evidence for the accuracy of the recovered-residual discretizations and for the relevance of the proposed algebraic diagnostics. In the nonsmooth and discontinuous coefficient tests, the reported row-sum, sign-violation, condition-number, and Schur-complement diagnostics reflect the two algebraic regimes identified in the analysis. The Monge--Amp\`ere type experiment further illustrates that the recovered-Hessian construction can be used within a Newton linearization in which the coefficient matrices are generated dynamically from the current recovered Hessian. These observations motivate further work on sharper stability and error estimates, particularly for the Galerkin-type formulation, whose mass-averaged matrix structure is not covered by the nodal monotonicity analysis, and for fully nonlinear extensions in which the coefficient matrices are generated dynamically during the nonlinear iteration.

\end{document}